\documentclass[11pt]{article}
\usepackage{amsmath, amsthm, amssymb, amsfonts}
\usepackage[utf8]{inputenc}
\usepackage{enumitem}
\usepackage[pdftex, colorlinks, linkcolor=vdarkblue,citecolor=vdarkred]{hyperref}
\usepackage{amsmath}
\usepackage{amssymb}
\usepackage{amsfonts}
\usepackage{latexsym}
\usepackage{amsthm}
\usepackage{amsxtra}
\usepackage{amscd}
\usepackage{bbm}
\usepackage{mathrsfs}
\usepackage{bm}
\usepackage{tensor}
\usepackage{comment}
\usepackage{mathtools}
\usepackage{geometry}
\usepackage{graphicx, color}
\numberwithin{equation}{section}

\geometry{a4paper, margin=1in}

\theoremstyle{plain} 
\newtheorem{theorem}{Theorem}[section]
\newtheorem*{theorem*}{Theorem}
\newtheorem{lemma}[theorem]{Lemma}
\newtheorem*{lemma*}{Lemma}
\newtheorem{corollary}[theorem]{Corollary}
\newtheorem*{corollary*}{Corollary}
\newtheorem{proposition}[theorem]{Proposition}
\newtheorem*{proposition*}{Proposition}

\newtheorem*{conjecture*}{Conjecture}

\newtheorem*{question*}{Question}

\theoremstyle{definition} 
\newtheorem{definition}[theorem]{Definition}
\newtheorem*{definition*}{Definition}

\newtheorem*{example*}{Example}
\newtheorem{remark}[theorem]{Remark}
\newtheorem*{remark*}{Remark}

\newtheorem*{assumption*}{Assumption}

\newcommand*{\deq}{\mathrel{\vcenter{\baselineskip0.65ex \lineskiplimit0pt \hbox{.}\hbox{.}}}=}
\newcommand{\bb}{\mathbb} 
\renewcommand{\cal}{\mathcal} 
\newcommand{\E}{\mathbb{E}}
\newcommand*{\eqd}{=\mathrel{\vcenter{\baselineskip0.65ex \lineskiplimit0pt \hbox{.}\hbox{.}}}}

\newcommand{\R}{\mathbb{R}}
\newcommand{\e}{\mathrm{e}}
\newcommand{\C}{\mathbb{C}}
\newcommand{\N}{\mathbb{N}}

\newcommand{\Tr}{\operatorname{Tr}}

\newcommand{\dd}{\mathrm{d}}

\newcommand{\numberthis}{\addtocounter{equation}{1}\tag{\theequation}}

\newcommand{\txt}[1]{\text{\rm{#1}}}
\newcommand{\col}{\mathrel{\vcenter{\baselineskip 0.75ex \lineskiplimit 0pt \hbox{.}\hbox{.}}}}
\renewcommand{\epsilon}{\varepsilon}
\allowdisplaybreaks


\definecolor{vdarkred}{rgb}{0.7,0,0.2}
\definecolor{vdarkblue}{rgb}{0,0.2,0.7}

\title{Fluctuations of functions of sparse Erd\H{o}s-R\'enyi graphs}

 \author{Hok-Yin Chu\footnote{University of Oxford, Mathematical Institute. Email: {\tt hok.chu@merton.ox.ac.uk}.}} 

\date{July 19, 2025}

\begin{document}

\maketitle

\begin{abstract}
 Let $A$ be the (rescaled) adjacency matrix of the Erd\H{o}s-R\'enyi graphs $\cal G(N,p)$. For $N^{-1+\tau} \leqslant p\leqslant N^{-\tau}$, we study the fluctuation of $f(A)_{ii}$ on the global and mesoscopic spectral scales. We show that the distribution of $f(A)_{ii}$ is asymptotically the sum of two independent Gaussian random variables on different scales, where a phase transition occurs on the spectral scale $p$.  
\end{abstract}
\section{Introduction}
Fix small $\tau>0$. In this paper, we consider the following class of random matrices.  
\begin{definition} [Sparse matrix] \label{def:sparse} Let $q\in [N^{\tau/2},N^{1/2-\tau/2}]$. Consider a real-symmetric $N\times N$ matrix $H$ whose entries $H_{ij}$ satisfy the following conditions.
	\begin{enumerate}
		\item[(i)] The upper-triangular entries ($H_{ij}\col 1 \leqslant i \leqslant j\leqslant N$) are independent.
		\item[(ii)] We have $\bb E H_{ij}=0$, $ \bb E H_{ij}^2=(1+O(\delta_{ij}))/N$, and $\bb E H_{ij}^4\asymp 1/(Nq^2)$ for all $i,j$.
		\item[(iii)] For any $k\geqslant 3$, we have
		$\bb E|H_{ij}|^k \leqslant C_k/ (Nq^{k-2})$ for all $i,j$.
	\end{enumerate}
	We define the random matrix
	$$
	A = H + f \e \e^*\,,
	$$
	where $\e \deq N^{-1/2} (1,1,\dots,1)^*$, and $f \asymp q$.
\end{definition}
One major motivation for Definition \ref{def:sparse} is the sparse Erd\H{o}s-R\'enyi graph $\cal G(N,p)$. More precisely, it is an undirected graph on $N$ vertices, and each edge is connected with probability $p$, independent from any other edges. Let $\cal A$ denote the adjacency matrix of the graph. Explicitly, $\cal A = (\cal A_{ij})_{i,j = 1}^N$ is a symmetric $N\times N$ matrix with independent upper triangular entries $(\cal A_{ij} \col i \leqslant j)$ satisfying
\begin{equation*}
	{\cal A}_{ij}=\begin{cases}
		1 & \txt{with probability } p
		\\
		0 & \txt{with probability } 1-p\,.
	\end{cases}
\end{equation*} 
We introduce the normalized adjacency matrix
\begin{equation} \label{1.11}
	A\deq \sqrt{\frac{1}{p(1-p)N}}\,\cal A\,,
\end{equation}
where the normalization is chosen so that the eigenvalues of $A$ are typically of order one. More precisely, let $\lambda_1\geqslant \cdots \geqslant \lambda_N$ be the eigenvalues of $A$. It can be shown that the empirical eigenvalue density of $A$ satisfies
\begin{equation}
	\mu(x)\deq \frac{1}{N}\sum_{i=1}^N\delta_{\lambda_i}(x) \overset{w}{\longrightarrow}\varrho_{sc}(x)\deq \frac{1}{2\pi} \sqrt{(4-x^2)_+}
\end{equation}
almost surely as $N\to \infty$.  It is easy to check that  when $N^{-1+\tau} \leqslant p\leqslant N^{-\tau}$, the rescaled adjacency matrix $A$ satisfies Definition \ref{def:sparse} with $q\deq\sqrt{Np}$.

The matrix $A$ has typically $N^2p$ nonzero entries, and hence $A$ is \textit{sparse} whenever $p\to 0$ as $N\to \infty$. The celebrated Wigner-dyson-Mehta (WDM) universality conjecture asserts that the local spectral properties of a random matrix do not depend on the explicit distribution of the matrix entries, and they are only determined by the symmetry class of the matrix. During the past decade, the
universality conjecture for sparse matrices has been established in a series of papers \cite{EKYY1,EKYY2,HLY15,LS1,HLY,HK20,Lee21,HY22} in
great generality. More precisely, it has been shown that when $p\geqslant N^{-1+o(1)}$, the averaged
$n$-point correlation functions and the distribution of a single eigenvalue gap of $A$ coincide with those of the
GOE, and the edge eigenvalues of $A$ have Tracy-Widom distributions.

Another important topic in random matrix theory is the study of linear eigenvalue statistics $\Tr f(A)$. When the graph is dense, $H\deq A-\bb EA$ is essentially a Wigner matrix, and the distribution of $\Tr f(H)$ was obtained both on the global \cite{LP,BWZ09} and mesoscopic scales \cite{HK}. For sparse matrices, the distribution of $\Tr f(H)$ was computed in \cite{ST12,H19}, where \cite{ST12} treated the global scale, and \cite{H19} handled the mesoscopic scales with the special test function $f(x)=(x-\mathrm{i})^{-1}$.

As a natural extension of the linear statistics, one can also study the fluctuations of functions (i.e. $f_{ij}(H)\deq f(H)_{ij}$) of random matrices. For Wigner matrices, the distribution of $f_{ij}(H)$ was derived both on the global \cite{ORS,ES16} and mesoscopic \cite{CEK6} scales. 

In this paper, we study the fluctuation of $f_{ii}(A)$ on the sparse levels $N^{-1+\tau} \leqslant p\leqslant N^{-\tau}$. Our test functions $f\equiv f_N \in C^{\infty}(\bb R)$ live on the scale $\eta_*\in [N^{-1+\tau},1]$. More precisely, let $F\in C_c^{\infty}(\bb R)$ be a function independent of $N$ and $E\in[-2+\tau, 2-\tau]$. Then \[f(x)\deq F\biggl(\frac{x-E}{\eta_*}\biggr)\,.\]
We further require that
\[f'(x) \neq0 \text{ only if }x\in (-2+\tau, 2-\tau)\,.\]

Assuming that all off-diagonal entries of $H$ are identically distributed, we may now state our main result.
\begin{theorem}[Convergence of general test functions on $A$] \label{thm:1} Let $f$ be as above and $N_{\bb R}(0,1)$ be a standard Gaussian random variable. Moreover, let $S$ denote the random variable with density $\varrho_{sc}(x)\deq \frac{1}{2\pi}\sqrt{(4-x^2)_+}$. Then
\[\frac{f_{ii}(A)-\bb E[f_{ii}(A)]}{\sqrt{V_{ii}(f)}}\xrightarrow{d} N_{\bb R}(0,1)\] 
as $N \to \infty$, where $V_{ii}(f)$ is defined as
\begin{align*}
&\,
\frac{2}{N}\biggl(\int_{-2}^2 f(x)^2\varrho_{sc}(x)\mathrm{d}x-\biggl(\int_{-2}^2 f(x)\varrho_{sc}(x)\mathrm{d}x\biggr)^2\biggr)+N \mathcal{C}_{4}(H_{12})\biggl(\int_{-2}^2 f(x)(1-x^2)\varrho_{sc}(x)\mathrm{d}x \biggr)^2
\\
 =&\,\frac{2}{N}\mathrm{Var}(f(S)) +N\cal C_4(H_{12}) \big(\bb E [f(S)(1-S^2)]\big)^2
\end{align*}
and
\[\bb E [f_{ii}(A)] = \int_{-2}^2 f(x)\varrho_{sc}(x)\mathrm{d}x + O(N^{-\tau/100}V_{ii}(f)^{1/2})\,.\]
\end{theorem}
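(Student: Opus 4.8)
The plan is to write $f_{ii}(A)$ in terms of the resolvent $G(z)\deq(A-z)^{-1}$ via the Helffer--Sjöstrand formula, to extract the leading fluctuation of $G_{ii}(z)$ by a Schur complement expansion, and to prove a conditional central limit theorem for the resulting quadratic form in one row of $H$. Let $m(z)\deq\int_{-2}^2\varrho_{sc}(x)(x-z)^{-1}\dd x$ be the Stieltjes transform of $\varrho_{sc}$, so that $m^2+zm+1=0$ and $m=(-z-m)^{-1}$, and let $\partial_{\bar z}=\tfrac12(\partial_x+\iunit\partial_y)$. Choosing an almost-analytic extension $\tilde f$ of $f$ of high order $\ell$, supported in $\{|\operatorname{Re}z-E|\lesssim\eta_*\}$ with $|\partial_{\bar z}\tilde f(z)|\lesssim|\operatorname{Im} z|^{\ell}\eta_*^{-\ell-1}\mathbf 1_{|\operatorname{Re}z-E|\lesssim\eta_*}+\eta_*^{-1}\mathbf 1_{|\operatorname{Im} z|\asymp\eta_*}$, one has $f_{ii}(A)=\tfrac1\pi\int_{\bb C}\partial_{\bar z}\tilde f(z)\,G_{ii}(z)\,\dd^2z$. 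The region $|\operatorname{Im} z|\le N^{-1+\epsilon}$ is discarded using $|G_{ii}(z)|\le|\operatorname{Im} z|^{-1}$ and the large $\ell$ (which makes the corresponding integral $O((N^{-1+\epsilon}/\eta_*)^{\ell})$), and on the complement the sparse entrywise and averaged local laws are available. I would also replace $A$ by $H$ via the Woodbury identity: since $\e^*G^H(z)\e=m(z)+o_\prec(1)$ and $f\asymp q\to\infty$, one gets $G^A_{ii}(z)=G^H_{ii}(z)+O_\prec(N^{-1})$, whose contribution to $f_{ii}(A)$ is $\ll N^{-\tau/100}V_{ii}(f)^{1/2}$.

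\emph{The leading fluctuation and the CLT.} Let $H^{(i)}$ be the minor of $H$ obtained by deleting row and column $i$, $G^{(i)}$ its resolvent, and $\mathbf h_i=(H_{ik})_{k\ne i}$; then $\mathbf h_i$ and $H_{ii}$ are independent of $H^{(i)}$. The Schur complement formula gives $G_{ii}(z)^{-1}=H_{ii}-z-Z_i(z)$ with $Z_i(z)\deq\langle\mathbf h_i,G^{(i)}(z)\mathbf h_i\rangle$, hence
\[
G_{ii}(z)-m(z)=m(z)^2\bigl(Z_i(z)-m(z)-H_{ii}\bigr)+m(z)^3\bigl(Z_i(z)-m(z)-H_{ii}\bigr)^2+\cdots .
\]
Using $\bb E_i Z_i(z)=\tfrac1N\Tr G^{(i)}(z)+O_\prec(N^{-1})=m(z)+O_\prec((N\operatorname{Im} z)^{-1}+q^{-2})$ and the conditional fluctuation bound $Z_i(z)-\bb E_iZ_i(z)=O_\prec((N\operatorname{Im} z)^{-1/2}+(N\cal C_4)^{1/2})$, integration against $|\partial_{\bar z}\tilde f|$ shows that all terms beyond the linear one, as well as the fluctuation of $\tfrac1N\Tr G^{(i)}$, contribute $O_\prec(N^{-1}+N\cal C_4\,\eta_*)$ in $L^2$, which is $\ll N^{-\tau/100}V_{ii}(f)^{1/2}$ because $N\cal C_4\asymp q^{-2}$ and $q\ge N^{\tau/2}$, $\eta_*\ge N^{-1+\tau}$. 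Thus $f_{ii}(A)-\bb Ef_{ii}(A)=X+o_{L^2}(V_{ii}(f)^{1/2})$ with $X=\tfrac1\pi\int_{\bb C}\partial_{\bar z}\tilde f\,m^2\bigl(Z_i-\bb E_iZ_i-H_{ii}\bigr)\dd^2z$, which, conditionally on $H^{(i)}$, is a quadratic-plus-linear form
\[
X=c_0+\sum_{k\ne i}\gamma_k\bigl(H_{ik}^2-\bb EH_{ik}^2\bigr)+\sum_{\substack{k,l\ne i\\ k\ne l}}\beta_{kl}H_{ik}H_{il}+\delta\,H_{ii}
\]
in the independent entries $\{H_{ik}:k\ne i\}\cup\{H_{ii}\}$, with $\beta_{kl}=\tfrac1\pi\int\partial_{\bar z}\tilde f\,m^2G^{(i)}_{kl}$, $\gamma_k=\tfrac1\pi\int\partial_{\bar z}\tilde f\,m^2G^{(i)}_{kk}$, $\delta=-\tfrac1\pi\int\partial_{\bar z}\tilde f\,m^2$. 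Since the sparse local law gives $\sum_l\beta_{kl}^2\ll\sum_{k,l}\beta_{kl}^2$ and the Lindeberg ratio for the squares is $O(q^{-1})$, a martingale central limit theorem (revealing $H_{i1},H_{i2},\dots$ in turn) yields $X/\sqrt{\Var(X\mid H^{(i)})}\xrightarrow{d}N_{\bb R}(0,1)$; the three groups of monomials are pairwise uncorrelated, each of $\sum_{k,l}\beta_{kl}^2$ and $\sum_k\gamma_k^2$ is a linear statistic of $G^{(i)}$ and concentrates, and evaluating $\bb E\Var(X\mid H^{(i)})$ using the contour identities $\tfrac1\pi\int\partial_{\bar z}\tilde f\,m^2=-\bb E[f(S)S]$, $\tfrac1\pi\int\partial_{\bar z}\tilde f\,m^3=-\bb E[f(S)(1-S^2)]$ (which follow from $m_\pm m_\mp=1$, $m_++m_-=-x$, $m_+-m_-=2\pi\iunit\varrho_{sc}(x)$) together with $\tfrac1N\Tr\bigl(G^{(i)}(z)G^{(i)}(z')\bigr)=\tfrac{m(z)-m(z')}{z-z'}+o_\prec(1)$ gives $\bb E\Var(X\mid H^{(i)})=V_{ii}(f)(1+o(1))$: the diagonal linear-in-squares part supplies $N\cal C_4(H_{12})(\bb E[f(S)(1-S^2)])^2$ (since $\gamma_k=-\bb E[f(S)(1-S^2)]+o(1)$ and $\sum_k\Var(H_{ik}^2)=N\cal C_4(H_{12})+O(N^{-1})$), and the off-diagonal, $H_{ii}$, and residual parts supply $\tfrac2N\Var(f(S))$. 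Slutsky's lemma then gives the convergence, and the two limiting Gaussians are independent because the off-diagonal and diagonal sums are uncorrelated and jointly Gaussian in the limit.

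\emph{The expectation.} A cumulant expansion of $\bb E[(HG)_{ii}]$ shows that the first non-negligible correction to the self-consistent equation for $\bb E[G_{ii}(z)]$ is the fourth-cumulant term $\sum_k\tfrac{\cal C_4(H_{ik})}{6}\bb E[\partial_{H_{ik}}^3G_{ki}]$, of size $N\cal C_4\asymp q^{-2}$ (the third-cumulant term being only $O((qN)^{-1})$ since $\bb E[G_{ki}]=O_\prec(N^{-1})$ and $\sum_k\bb E[G_{ki}^2]=O(1)$ in the bulk), so that $\bb E[G_{ii}(z)]=m(z)+O\bigl(N^{o(1)}((N\operatorname{Im} z)^{-1}+q^{-2})\bigr)$. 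Integrating against $\partial_{\bar z}\tilde f$, with the high-order extension absorbing the small-$|\operatorname{Im} z|$ region, gives $\bb E[f_{ii}(A)]-\int_{-2}^2 f\varrho_{sc}=O\bigl(N^{o(1)}(N^{-1}+\eta_*q^{-2})\bigr)$; since $V_{ii}(f)\gtrsim\eta_*/N$ and, when the sparse term is nondegenerate, $V_{ii}(f)\gtrsim\eta_*^2/q^2$, one checks from $q\ge N^{\tau/2}$ and $\eta_*\ge N^{-1+\tau}$ that this is $\ll N^{-\tau/100}V_{ii}(f)^{1/2}$.

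\emph{Main obstacle.} The delicate point is the uniform control of all error terms in the first two steps across the whole range $N^{-1+\tau}\le\eta_*\le1$, $N^{\tau/2}\le q\le N^{1/2-\tau/2}$, in particular when $\eta_*$ is close to $N^{-1+\tau}$ and $V_{ii}(f)^{1/2}$ is as small as $N^{-1+\tau/2}$: the sparse entrywise and averaged local laws must be run down to the optimal scale $\operatorname{Im} z\asymp N^{-1+\epsilon}$, and every subleading contribution---the higher Schur orders, the replacement $G^{(i)}\to G$, the removal of the rank-one part $f\e\e^*$, the fluctuation of $\tfrac1N\Tr G^{(i)}$, and the cumulant-expansion remainders---must be tracked to precision $N^{-\tau/100}V_{ii}(f)^{1/2}$, with the two variance contributions handled simultaneously since they are comparable precisely at the transition scale $\eta_*\asymp p$. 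A secondary structural point is to establish the joint Gaussianity, hence independence, of the off-diagonal quadratic form and the diagonal linear-in-squares form, which I would obtain by applying a single martingale central limit theorem to the combined form rather than to each piece separately.
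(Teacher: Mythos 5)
Your proposal takes a genuinely different route from the paper. The paper runs a Stein--Tikhomirov argument: it differentiates the characteristic function $\psi_{ii}(\lambda)=\bb E[e(\lambda)]$, applies a cumulant expansion directly to $\mathcal E(z)=\bb E[e(\lambda)\langle G_{ii}(z)\rangle]$ together with the self-consistent equation $(z+m)^{-1}=-m$ and Helffer--Sj\"ostrand, and closes an ODE $\psi_{ii}'=-\lambda\psi_{ii}\widetilde V_{ii}/V_{ii}+O_\prec(N^{-\tau/200})$ whose solution is already Gaussian; no separate limit theorem is needed. You instead reduce $\langle G_{ii}\rangle$ via a Schur complement to a quadratic-plus-linear form $\sum\gamma_k(H_{ik}^2-\bb E H_{ik}^2)+\sum_{k\ne l}\beta_{kl}H_{ik}H_{il}+\delta H_{ii}$ in one row of $H$, conditionally on the minor, and invoke a martingale CLT. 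Both strategies are legitimate; the paper's buys mechanical bookkeeping of all subleading errors through a single cumulant hierarchy, while yours makes the probabilistic source of the two Gaussians explicit, at the price of having to prove a CLT for a sparse-entry quadratic form (Lindeberg/Lyapunov plus concentration of the conditional variance $\sum_k(\sum_{l<k}\beta_{kl}H_{il})^2$, a second-layer quadratic form), which is the main technical burden your sketch leaves open and which the paper's argument entirely bypasses. Your variance bookkeeping does recover $V_{ii}(f)$: using $z=-(1+m^2)/m$ one has $(m(z)m(z')-m^2(z)m^2(z'))\tfrac{m(z')-m(z)}{z'-z}=m^2(z)m^2(z')$, and integrating against $\partial_{\bar z}\tilde f\,\partial_{\bar z'}\tilde f/\pi^2$ shows that your off-diagonal kernel $m^2m'^2\frac{m-m'}{z-z'}$ plus the cancelling $\frac{2}{N}(\bb E f(S)(1-S^2))^2$ terms from the $k=l$ diagonal reproduces $\frac{2}{N}\Var(f(S))$ up to $(\Var(H_{ii})-2/N)(\bb E[f(S)S])^2=O(\eta_*^2/N)$, which is $O(p)V_{ii}(f)$ and hence negligible -- this is precisely the content of the paper's remark that the $H_{ii}\bb E[f(S)S]$ contribution can be ignored in the sparse regime, so you should make this cancellation explicit rather than folding it into ``residual parts.''

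Two further points to tighten. First, the Woodbury bound is misstated: with $(G^H\e)_i\prec q^{-1}$ and $1+f\e^*G^H\e\asymp q$ one gets $G^A_{ii}-G^H_{ii}=O_\prec(q^{-2})$, not $O_\prec(N^{-1})$; after integration this still contributes $O_\prec(\eta_*q^{-2})\ll N^{-\tau/100}V_{ii}(f)^{1/2}$, so the conclusion survives, but the paper avoids the issue entirely by keeping $A$ and absorbing the rank-one part into the cumulant expansion as the $O_\prec(N^{-1/2})$ term $(c)$. Second, when you truncate the Schur expansion at the linear term and claim the higher orders contribute $o_{L^2}(V_{ii}(f)^{1/2})$, you must also control the cross-covariance between the quadratic $m^3(Z_i-m-H_{ii})^2$ term and the linear term (odd moments of the quadratic form), not merely the $L^2$ size of the quadratic term itself; for sparse entries the third cumulant $\cal C_3\asymp (Nq)^{-1}$ makes these cross terms nonzero, and they need the same $q^{-2}/N$-type bound. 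Neither issue is fatal, but the uniform control you flag as the ``main obstacle'' is indeed where the bulk of the work lies, and it is exactly what the paper's Lemma on $(z+m)\mathcal E_\alpha(z)$ is engineered to handle in one pass.
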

Here, $\cal C_4$ denotes the fourth cumulant.
\begin{remark}
	(i) The first term on $V_{ii}(f)$ is of order $\eta_*/N$, while the second term is of order $\eta^2_*/q^2$. Thus, for the diagonal entries $f_{ii}(A)$, there is a phase transition of the fluctuation on the scale $\eta_*/N\asymp \eta^2_*/q^2$, i.e.\,$\eta_*\asymp q^2/N= p$.

(ii) For simplicity, here we only state the result for $f_{ii}(A)$. It can be checked that the same result also holds for $f_{ii}(H)$.
\end{remark}

Our proof begins with the strategies of \cite{HK,H,LS18}, by converting the general test function to the Green function, and then compute through the cumulant expansion formula. Notably, for sparse matrices, we do not need to remove the diagonal contribution $H_{ii}\int f(x) x \varrho_{sc}(x)\dd x$ as in the Wigner case \cite{ES16}, since this term is always negligible in the sparse regime $p\leqslant N^{-o(1)}$. In addition, we have new terms arising from the large expectation of $A$. 

\paragraph{Acknowledgment}The author is partially supported by Hong Kong RGC Grant
No.\,21300223.
\section{Preliminaries}
Define
\[G\deq (A-z)^{-1}\,.\]
Let $W$ be an $N\times N$ matrix. We denote the normalized trace of $W$ by $\underline{W}\deq \frac{1}{N} \operatorname{Tr}W$ and define $\langle X \rangle \deq X- \bb EX$ for any random variable $X$ with finite expectation. For the Green function $G$, we have the following differential rule.
\begin{equation}
\label{eq:H_diff}
\frac{\partial G_{ij}}{\partial H_{kl}} =-(G_{ik}G_{jl}+G_{il}G_{kj})(1+\delta_{kl})^{-1}\,.
\end{equation}
The Stieltjes transform of $\varrho_{sc}(x)$ is defined by
\[\qquad m(z) \deq \int\frac{\varrho_{sc}(x)}{x-z}\,\mathrm{d}x\,.\]
It can be verified that $m(z)$ satisfies the equation
\begin{equation}
\label{eq:m}
m(z)^2+zm(z)+1=0\,.
\end{equation}
For a real-valued random variable $h$ with finite moments of all orders, the n-th cumulant is defined as 
\[\mathcal{C}_{n}(h) \deq (-1)^{n}\biggl(\frac{d^n}{dt^n} \mathrm{log} \bb E[e^{\mathrm{i}th}]\biggl)\bigg|_{t=0}\,.\]
Our main tool of computation is the cumulant expansion formula, whose proof is given in e.g. \cite[Appendix A]{HKR}.
\begin{lemma}[Cumulant expansion]\label{lemma:cumulant}
Let $f\col\R\to\C$ be a smooth function, and denote by $f^{(r)}$ its $r$-th derivative. Then, for every fixed $\ell \in\N$, we have 
\begin{equation}\label{eq:cumulant_expansion}
\mathbb{E}\big[h\cdot f(h)\big]=\sum_{r=0}^{\ell}\frac{1}{r!}\mathcal{C}_{r+1}(h)\mathbb{E}[f^{(r)}(h)]+\cal R_{\ell+1}\,,
	\end{equation}	
	assuming that all expectations in \eqref{eq:cumulant_expansion} exist, where $\cal R_{\ell+1}$ is a remainder term (depending on $f$ and $h$), such that for any $t>0$,
	\begin{equation} \label{remainder}
		\cal R_{\ell+1} = O(1) \cdot \bigg(\E\sup_{|x| \le |h|} \big|f^{(\ell+1)}(x)\big|^2 \cdot \E \,\big| h^{2\ell+4} \mathbf{1}_{|h|>t} \big| \bigg)^{1/2} +O(1) \cdot \bb E |h|^{\ell+2} \cdot  \sup_{|x| \le t}\big|f^{(\ell+1)}(x)\big|\,.
	\end{equation}
\end{lemma}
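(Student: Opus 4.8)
The plan is to reduce the claimed expansion, as far as its main term goes, to an \emph{exact} algebraic identity for polynomial test functions — which is nothing but the classical moment--cumulant recursion — and then to treat a general smooth $f$ by subtracting its degree-$\ell$ Taylor polynomial at the origin and estimating what is left. Beyond Taylor's theorem with remainder, Hölder's inequality and Cauchy--Schwarz, the only probabilistic input is the characteristic function of $h$, used solely to identify the recursion coefficients with the cumulants $\cal C_n(h)$.

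\emph{Step 1 (polynomial test functions).} Set $\psi(t)\deq\bb E[\e^{\iunit t h}]$. Since $h$ has finite moments of all orders, $\psi\in C^{\infty}$ with $\psi^{(k)}(0)=\iunit^{k}\bb E[h^{k}]$; since $\psi(0)=1$, the function $K\deq\log\psi$ is smooth near $0$, and its Taylor coefficients $K^{(k)}(0)$ equal $\cal C_{k}(h)$ up to the power of $\iunit$ built into the definition of $\cal C_{k}$. Differentiating $\psi'=K'\psi$ exactly $n$ times at $t=0$ by the Leibniz rule and matching powers of $\iunit$ gives
\[
\bb E[h^{n+1}]=\sum_{r=0}^{n}\binom{n}{r}\cal C_{r+1}(h)\,\bb E[h^{n-r}]=\sum_{r=0}^{n}\frac{1}{r!}\cal C_{r+1}(h)\,\bb E\big[(x^{n})^{(r)}(h)\big]\,,
\]
and hence, by linearity, for \emph{every} polynomial $P$ with $\deg P\le\ell$,
\[
\bb E[h\,P(h)]=\sum_{r=0}^{\ell}\frac{1}{r!}\cal C_{r+1}(h)\,\bb E\big[P^{(r)}(h)\big]\,,
\]
the terms with $r>\deg P$ being identically zero. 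This is exactly \eqref{eq:cumulant_expansion} with $\cal R_{\ell+1}=0$ for $f$ a polynomial of degree at most $\ell$.

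\emph{Step 2 (general $f$ and the remainder).} Splitting a complex $f$ into real and imaginary parts, assume $f$ real. Let $P$ be the degree-$\ell$ Taylor polynomial of $f$ at $0$ and $R\deq f-P$; Taylor's theorem gives, for $0\le r\le\ell$,
\[
|R^{(r)}(x)|\le\frac{|x|^{\ell+1-r}}{(\ell+1-r)!}\sup_{|y|\le|x|}|f^{(\ell+1)}(y)|\,.
\]
Applying Step 1 to $P$ and subtracting $\sum_{r\le\ell}\frac{1}{r!}\cal C_{r+1}(h)\bb E[f^{(r)}(h)]=\sum_{r\le\ell}\frac{1}{r!}\cal C_{r+1}(h)\bb E[(P^{(r)}+R^{(r)})(h)]$ from $\bb E[hf(h)]=\bb E[hP(h)]+\bb E[hR(h)]$, one reads off
\[
\cal R_{\ell+1}=\bb E[h\,R(h)]-\sum_{r=0}^{\ell}\frac{1}{r!}\cal C_{r+1}(h)\,\bb E\big[R^{(r)}(h)\big]\,;
\]
thus $\cal R_{\ell+1}$ only involves a test function, $R$, that vanishes to order $\ell+1$ at $0$. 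Now bound each summand with the Taylor estimate above, using also $|\cal C_{r+1}(h)|\le C_{r}\,\bb E|h|^{r+1}$ (each monomial in the moment--cumulant polynomial of $\cal C_{r+1}$ is bounded by $\bb E|h|^{r+1}$ via Jensen), and split every expectation over $\{|h|\le t\}$ and $\{|h|>t\}$. On $\{|h|\le t\}$ one replaces $\sup_{|x|\le|h|}|f^{(\ell+1)}(x)|$ by $\sup_{|x|\le t}|f^{(\ell+1)}(x)|$ and, recombining the surviving powers of $|h|$ by Hölder, is left with $O(1)\,\bb E|h|^{\ell+2}\sup_{|x|\le t}|f^{(\ell+1)}(x)|$; on $\{|h|>t\}$ one uses Cauchy--Schwarz to factor out $\big(\bb E\sup_{|x|\le|h|}|f^{(\ell+1)}(x)|^{2}\big)^{1/2}$ and is left with a truncated moment of $h$, reaching $O(1)\big(\bb E\sup_{|x|\le|h|}|f^{(\ell+1)}(x)|^{2}\cdot\bb E|h^{2\ell+4}\mathbf{1}_{|h|>t}|\big)^{1/2}$. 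Together these give \eqref{remainder}.

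The main obstacle lies entirely in the bookkeeping of the last step: the two clean error terms in \eqref{remainder} are not literally what falls out of the crude estimates, and making them appear requires some care with the cutoff $t$. For each $r$ one must absorb the factor $\bb E|h|^{r+1}$ and the residual powers of $|h|$ coming from Cauchy--Schwarz into the advertised moments $\bb E|h|^{\ell+2}$ and $\bb E|h^{2\ell+4}\mathbf{1}_{|h|>t}|$; this uses monotonicity of $L^{p}$-norms and, on the tail, the inequality $|h|^{a}\le t^{a-b}|h|^{b}$ valid on $\{|h|>t\}$ for $a\le b$, and one finally checks that all accumulated constants depend only on $\ell$, so that they fit into the $O(1)$. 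The remaining points — that $K=\log\psi$ is $C^{\ell+2}$ near $0$ and that its Taylor coefficients are the $\cal C_{k}(h)$ — follow at once from $\psi(0)=1$ and the finiteness of all moments, and are the only place where the probabilistic definition of the cumulants enters.
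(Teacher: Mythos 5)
The paper does not prove this lemma at all; it is quoted from the literature with the proof deferred to \cite[Appendix A]{HKR}. Your strategy (exact moment--cumulant recursion for polynomials of degree $\le\ell$ via $\psi'=K'\psi$, then Taylor remainder plus a split at the truncation level $t$) is the standard route and is sound in outline; Steps 1 and the identification $\cal R_{\ell+1}=\E[hR(h)]-\sum_{r\le\ell}\frac{1}{r!}\cal C_{r+1}(h)\E[R^{(r)}(h)]$ are correct, as are the bounds $|\cal C_{r+1}(h)|\le C_r\,\E|h|^{r+1}$ and the treatment of the term $\E[hR(h)]$ and of everything supported on $\{|h|\le t\}$.

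There is, however, a genuine gap in the one place you yourself flag as the main obstacle, and as written your recipe does not close it. For $0\le r\le\ell$ the tail contribution of the $r$-th cumulant term is
\[
\frac{1}{r!}\,|\cal C_{r+1}(h)|\cdot\E\big[|R^{(r)}(h)|\1_{|h|>t}\big]\;\le\;C_{\ell}\,\E|h|^{r+1}\cdot\E\big[|h|^{\ell+1-r}S(h)\1_{|h|>t}\big],\qquad S(h)\deq\sup_{|y|\le|h|}|f^{(\ell+1)}(y)|,
\]
and here the factor $\E|h|^{r+1}$ is a \emph{full} moment sitting outside a \emph{tail} expectation. Applying Cauchy--Schwarz to the tail factor and then your inequality $|h|^{a}\le t^{a-b}|h|^{b}$ leaves you with $\E|h|^{r+1}\,t^{-(r+1)}\big(\E S^2\cdot\E[|h|^{2\ell+4}\1_{|h|>t}]\big)^{1/2}$, and $\E|h|^{r+1}t^{-(r+1)}$ is not $O(1)$ uniformly in $t>0$, which the lemma requires. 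The repair is to split the outer moment as well: $\E|h|^{r+1}\le t^{r+1}+\E[|h|^{r+1}\1_{|h|>t}]$. The piece $t^{r+1}$ is pushed \emph{inside} the tail integral, where $t^{r+1}\le|h|^{r+1}$ reconstitutes $\E[|h|^{\ell+2}S(h)\1_{|h|>t}]$, which Cauchy--Schwarz bounds by the first term of \eqref{remainder}. For the piece $\E[|h|^{r+1}\1_{|h|>t}]$, both factors are now tail expectations, and Hölder with respect to the measure $\1_{|h|>t}\,\dd\Prob$ (exponents $\frac{2\ell+4}{2r+2}$ and $\frac{2\ell+4}{2\ell+2-2r}$) gives
\[
\E[|h|^{r+1}\1_{|h|>t}]\cdot\E[|h|^{\ell+1-r}S(h)\1_{|h|>t}]\;\le\;\Prob(|h|>t)\,\big(\E S^2\cdot\E[|h|^{2\ell+4}\1_{|h|>t}]\big)^{1/2},
\]
which again lands in the first term of \eqref{remainder}. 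With this modification your argument is complete; without it, the asserted uniformity in $t$ fails.
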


By Definition \ref{def:sparse} and Jensen's inequality, we can bound the cumulants of $A$.
\begin{lemma} For every $r \in \bb N$, we have
\[\mathcal{C}_{r}(A_{ij})=O_{r}(1/Nq^{r-2})\]
uniformly for all $i,j$.
\end{lemma}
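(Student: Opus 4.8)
The plan is to reduce the claim to a moment estimate on the entries of $H$ and then feed it into the combinatorial moment--cumulant formula. Since all entries of $\e$ equal $N^{-1/2}$, we have $A_{ij} = H_{ij} + (f\e\e^*)_{ij} = H_{ij} + f/N$, so $A_{ij}$ is $H_{ij}$ shifted by the deterministic constant $f/N \asymp q/N$. Cumulants of order $r \geq 2$ are translation invariant, hence $\mathcal{C}_r(A_{ij}) = \mathcal{C}_r(H_{ij})$ for $r \geq 2$, while $\mathcal{C}_1(A_{ij}) = \bb E A_{ij} = f/N = O_1(q/N)$, which is precisely the asserted bound at $r = 1$. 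So it remains to bound $\mathcal{C}_r(H_{ij})$ for $r \geq 2$.

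First I would record the uniform moment bound $\bb E |H_{ij}|^k \leq C_k/(Nq^{k-2})$ for every integer $k \geq 2$ and all $i,j$: for $k = 2$ this is condition (ii) of Definition~\ref{def:sparse} (the $O(\delta_{ij})$ term is harmlessly bounded), and for $k \geq 3$ it is condition (iii); Jensen's inequality lets one pass between these absolute moments and the signed moments that appear below. Since $\bb E H_{ij} = 0$, the moment--cumulant relation
\[
\mathcal{C}_r(H_{ij}) = \sum_{\pi}(-1)^{|\pi|-1}(|\pi|-1)!\prod_{B\in\pi}\bb E H_{ij}^{|B|}\,,
\]
with $\pi$ ranging over set partitions of $\{1,\dots,r\}$, only receives contributions from partitions all of whose blocks have size $\geq 2$, because a singleton block contributes a factor $\bb E H_{ij} = 0$. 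For such a partition with $m \deq |\pi|$ blocks of sizes $b_1,\dots,b_m \geq 2$ summing to $r$, the moment bound gives
\[
\Bigl|\prod_{i} \bb E H_{ij}^{b_i}\Bigr| \leq \Bigl(\prod_{i} C_{b_i}\Bigr) N^{-m} q^{-(r-2m)} = \Bigl(\prod_{i} C_{b_i}\Bigr)\Bigl(\frac{q^2}{N}\Bigr)^{m-1}\frac{1}{Nq^{r-2}} \leq \Bigl(\prod_{i} C_{b_i}\Bigr)\frac{1}{Nq^{r-2}}\,,
\]
where in the last step we used $m \geq 1$ together with $q^2 \leq N^{1-\tau} \leq N$. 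Summing over the finitely many set partitions of $\{1,\dots,r\}$ absorbs all constants into an $r$-dependent prefactor and yields $\mathcal{C}_r(H_{ij}) = O_r(1/(Nq^{r-2}))$, uniformly in $i,j$ since all the implicit constants in (ii)--(iii) are uniform.

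There is no serious obstacle here; the one point worth isolating is the combinatorial bookkeeping in the moment--cumulant expansion --- namely that centering kills every partition with a singleton block, so each surviving term carries a factor $N^{-m}q^{-(r-2m)}$ with $m\geq 1$, which the elementary inequality $q^2 \leq N$ then collapses onto the single target $1/(Nq^{r-2})$. The edge cases $r=1,2$ and the passage from absolute to signed moments are routine.
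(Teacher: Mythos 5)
Your proof is correct and follows exactly the route the paper gestures at with its one-line justification ("By Definition \ref{def:sparse} and Jensen's inequality"): reduce to $H_{ij}$ via shift-invariance of higher cumulants (handling $r=1$ separately via $\bb E A_{ij}=f/N\asymp q/N$), then feed the moment bounds from Definition \ref{def:sparse} into the moment--cumulant expansion, noting that centering kills every partition containing a singleton block and that $q^2\leqslant N$ collapses the surviving contributions $N^{-m}q^{-(r-2m)}$ down to $1/(Nq^{r-2})$. The paper provides no further detail, so your write-up is simply the spelled-out version of the same argument.
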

The following tool from functional analysis will be useful.
\begin{lemma}[Helffer-Sjöstrand formula \cite{Davies}]\label{lemma:HS}
Let \(f\in C^2(\mathbb{R})\). Let \(\tilde{f}\) be the almost analytic extension of \(f\) defined by
\[\tilde{f}(x+{\rm i}y)\deq f(x)+{\rm i}yf'(x)\,.\]
Let \(\chi\in\mathcal{C}_{c}^{\infty}(\mathbb{R})\) be a cutoff function satisfying \(\chi(0)=1\), and by a slight abuse of notation write \(\chi(z)\equiv\chi(\operatorname{Im}z)\). Then for any \(\lambda\in\mathbb{R}\), we have
\[f(\lambda)=\frac{1}{\pi}\int_{\mathbb{C}}\frac{\partial_{\bar{z}}(\tilde{f}(z)\chi(z))}{\lambda-z}\,{\rm d}^{2}z\,,\]
where \(\partial_{\bar{z}}\deq \frac{1}{2}(\partial_{x}+{\rm i}\partial_{y})\) is the antiholomorphic derivative and \({\rm d}^{2}z\) the Lebesgue measure on \(\mathbb{C}\).
\end{lemma}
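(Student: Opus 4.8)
The statement is purely a fact from complex analysis, so no probabilistic input is needed; the plan is to reduce it to the \emph{Cauchy--Pompeiu representation} of a compactly supported function and then to check the two normalizations built into the almost-analytic extension and the cutoff. Set $g(z)\deq \tilde f(z)\chi(z)$. A direct computation gives $\partial_{\bar z}\tilde f(x+\iunit y)=\tfrac{\iunit y}{2}f''(x)$, so $\partial_{\bar z}g$ is continuous on $\C$ (this is where $f\in C^2$ is used), and $g$ is supported in the horizontal strip $\{|\operatorname{Im}z|\leqslant C\}$ with $C$ bounding $\operatorname{supp}\chi$; for the test functions used in this paper, $f(x)=F((x-E)/\eta_*)$ with $F\in C_c^\infty$, so in fact $\operatorname{supp}g$ is compact. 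The claim to be established is the representation
\[ g(w)\;=\;\frac{1}{\pi}\int_{\C}\frac{\partial_{\bar z}g(z)}{w-z}\,\dd^2 z\qquad\text{for every }w\in\C . \]
Granting this and specializing to $w=\lambda\in\R$, one has $\tilde f(\lambda)=f(\lambda)+\iunit\cdot 0\cdot f'(\lambda)=f(\lambda)$ and $\chi(\lambda)=\chi(\operatorname{Im}\lambda)=\chi(0)=1$, hence $g(\lambda)=f(\lambda)$, which is precisely the asserted identity.

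To prove the displayed representation I would fix $w$, choose $R$ large enough that $\operatorname{supp}g\subset\{|z|<R\}$, and apply the complex form of Green's theorem on the punctured region $\Omega_\eps\deq\{|z|<R\}\setminus\overline{B_\eps(w)}$: for $h\in C^1(\overline{\Omega_\eps})$,
\[ \int_{\Omega_\eps}\partial_{\bar z}h\,\dd^2 z\;=\;\frac{1}{2\iunit}\oint_{\partial\Omega_\eps}h\,\dd z , \]
with $\partial\Omega_\eps$ positively oriented (outer circle counterclockwise, inner circle clockwise). Take $h(z)=g(z)/(w-z)$, which is $C^1$ on $\overline{\Omega_\eps}$ because the pole at $z=w$ has been excised; since $1/(w-z)$ is holomorphic there, $\partial_{\bar z}h=\partial_{\bar z}g/(w-z)$, so the left-hand side equals $\int_{\Omega_\eps}\tfrac{\partial_{\bar z}g(z)}{w-z}\dd^2 z$. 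On the boundary the outer circle contributes nothing because $g$ vanishes on $\{|z|=R\}$, while parametrizing the inner circle clockwise gives $\oint\tfrac{g(z)}{w-z}\dd z=\iunit\int_0^{2\pi}g(w+\eps e^{\iunit\theta})\,\dd\theta$, so the right-hand side equals $\tfrac12\int_0^{2\pi}g(w+\eps e^{\iunit\theta})\,\dd\theta$. Letting $\eps\downarrow 0$, the inner-boundary term converges to $\pi g(w)$ by continuity of $g$, and $\int_{\Omega_\eps}\tfrac{\partial_{\bar z}g}{w-z}\dd^2 z\to\int_{\C}\tfrac{\partial_{\bar z}g}{w-z}\dd^2 z$ by dominated convergence, since $\partial_{\bar z}g$ is bounded and $z\mapsto|w-z|^{-1}$ is locally integrable in $\R^2$. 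Equating the two limits yields the representation.

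The step to watch is the orientation and constant bookkeeping in the complex Green's theorem: I would derive $\oint_{\partial\Omega}h\,\dd z=2\iunit\int_{\Omega}\partial_{\bar z}h\,\dd^2 z$ from the real Green's theorem by writing $h\,\dd z=h\,\dd x+\iunit h\,\dd y$, since any slip there corrupts the constant $1/\pi$. A second, minor point is that $\chi$ is compactly supported only in the imaginary direction, so the compactness of $\operatorname{supp}g$ genuinely uses that $f$ has compact support (as it does here); for a general $f\in C^2$ of at most polynomial growth one would first truncate $f$ in the real direction, run the same argument, and pass to the limit using the decay $\partial_{\bar z}\tilde f=O(\operatorname{Im}z)$ together with the localization provided by $\chi$. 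Alternatively, the whole statement follows in one line from the fact that $\partial_{\bar z}(\pi z)^{-1}=\delta_0$ in the sense of distributions, i.e.\ $g=\bigl(\partial_{\bar z}(\pi z)^{-1}\bigr)*g=(\pi z)^{-1}*\partial_{\bar z}g$; the excision argument above is merely the explicit proof of that identity. No estimate from Lemma~\ref{lemma:cumulant} or the sparse structure of $A$ enters.
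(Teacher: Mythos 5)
The paper does not prove this lemma; it is quoted verbatim from the cited reference [Davies], so there is no in-paper argument to compare against. Your proof is the standard Cauchy--Pompeiu derivation and it is correct: the computation $\partial_{\bar z}\tilde f=\tfrac{\iunit y}{2}f''(x)$ is right, the constant in the complex Green's theorem $\oint_{\partial\Omega}h\,\dd z=2\iunit\int_\Omega\partial_{\bar z}h\,\dd^2z$ is right, the inner-circle limit gives $\pi g(w)$, and the specialization $g(\lambda)=f(\lambda)\chi(0)=f(\lambda)$ closes the argument. The only place where your write-up falls short of the lemma as literally stated is the hypothesis ``$f\in C^2(\R)$'' with no support or decay assumption: your excision argument needs $g$ to vanish on the outer circle, which you get from compact support of $f$. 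You flag this correctly and note that the paper's test functions $f(x)=F((x-E)/\eta_*)$ are compactly supported, so for every use of the lemma in this paper your proof is complete; for a genuinely general $f\in C^2$ one must either add an integrability hypothesis (without which the right-hand side need not even converge absolutely) or run the truncation-and-limit argument you sketch. That caveat is a defect of the lemma's statement rather than of your proof.
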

\begin{definition}[Stochastic domination]
Let
\[X=\biggl(X^{(N)}(u):N\in\mathbb{N},u\in U^{(N)}\biggr)\,,\qquad Y=\biggl(Y^{(N)}(u):N\in\mathbb{N},u\in U^{(N)}\biggr)\]
be two families of nonnegative random variables, where \(U^{(N)}\) is a possibly \(N\)-dependent parameter set. We say that \(X\) is stochastically dominated by \(Y\), uniformly in \(u\), if for all (small) \(\varepsilon>0\) and (large) \(D>0\) we have
\[\sup_{u\in U^{(N)}}\mathbb{P}\biggl[X^{(N)}(u)>N^{\varepsilon}Y^{(N)}(u)\biggr] \leqslant N^{-D}\]
for large enough \(N\geqslant N_{0}(\varepsilon,D)\). If \(X\) is stochastically dominated by \(Y\), we use the notation \(X\prec Y\), or equivalently $X=O_{\prec}(Y)$.
\end{definition}
We recall the local semicircle law for $A$ from \cite[Theorem 2.9]{EKYY1}. Define the domain
\[\mathbf{S}\deq \{E+\mathrm{i}\eta:|E|\leqslant 4,0<\eta\leqslant 4\}\,.\]
\begin{theorem}[Local semicircle law for A]\label{thm:semiA}
We have the bounds
\[\max_{i,j}|G_{ij}(z)-\delta_{ij}m(z)| \prec \frac{1}{q}+\sqrt{\frac{\operatorname{Im}m(z)}{N\eta}}+\frac{1}{N\eta}\]
and
\[|\underline{G}(z)-m(z)| \prec \frac{1}{q} \wedge \frac{1}{q^2(\eta + |2-|E||)}+\frac{1}{N\eta}\,,\]
uniformly in \(z=E+\mathrm{i}\eta\in\mathbf{S}\).
\end{theorem}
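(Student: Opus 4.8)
Since the statement is \cite[Theorem 2.9]{EKYY1}, the plan is only to recall the structure of its proof and to indicate how the rank-one deformation $f\e\e^*$ enters. Write $G^H\deq(H-z)^{-1}$ and set $\Lambda_o\deq\max_{i\ne j}|G_{ij}|$, $\Lambda_d\deq\max_i|G_{ii}-m|$, $\Lambda\deq\Lambda_o\vee\Lambda_d$, and $\theta\deq|\underline G-m|$. The argument has three ingredients: (a) a self-improving \emph{entrywise} bound, asserting that on any subdomain of $\mathbf S$ on which $\Lambda\leqslant(\log N)^{-1}$ one actually has $\Lambda\prec q^{-1}+\sqrt{\operatorname{Im}m/(N\eta)}+(N\eta)^{-1}$; (b) a \emph{fluctuation-averaging} bound sharpening the estimate for $\theta$; and (c) a continuity argument in $\eta$, initialised at $\eta\asymp 1$ (where $\|G\|\leqslant\eta^{-1}$ makes all bounds trivial), which removes the a priori hypothesis in (a) and propagates the bounds down to all of $\mathbf S$.

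For (a) I would first treat $H$ (i.e.\ set $f=0$). The Schur complement formula gives $\big(G^H_{ii}\big)^{-1}=H_{ii}-z-Z_i-\tfrac1N\sum_{k\ne i}G^{(i)}_{kk}$, where $G^{(i)}$ is the resolvent of the minor of $H$ with row and column $i$ removed and $Z_i\deq\sum_{k,l\ne i}H_{ik}G^{(i)}_{kl}H_{li}-\tfrac1N\sum_{k\ne i}G^{(i)}_{kk}$. Standard resolvent identities relating $G^H$ and $G^{(i)}$ give $\tfrac1N\sum_{k\ne i}G^{(i)}_{kk}=m+O_\prec(\theta)+(\text{negligible})$. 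The key probabilistic input is the sparse large-deviation estimate for the quadratic form $Z_i$, which uses precisely the moment bounds (ii)--(iii) of Definition \ref{def:sparse} and yields $|Z_i|\prec q^{-1}(1+\Lambda)+\sqrt{(\operatorname{Im}m+\theta+\Lambda)/(N\eta)}$; here the $q^{-1}$ term is the genuinely sparse contribution, absent for Wigner matrices. Feeding these into the Schur expansion and using \eqref{eq:m} yields $G^H_{ii}-m=O_\prec(\text{those errors})$, and a parallel expansion of the off-diagonal $G^H_{ij}$ bounds $\Lambda_o$ in terms of the same quantities. Solving the resulting self-consistent \emph{inequality} for $\Lambda$ --- absorbing the a priori small $\Lambda$ on the right-hand side --- produces the entrywise bound for $H$.

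The deformation is reinstated through the rank-one resolvent identity $G=G^H-(1+f\,\e^*G^H\e)^{-1}\,f\,(G^H\e)(G^H\e)^*$. Since $f\asymp q\to\infty$ and, by the isotropic version of (a) applied to the fixed deterministic unit vector $\e$, one has $\e^*G^H\e=m+o(1)$ with $|1+fm|\asymp q$ uniformly on $\mathbf S$ while $(G^H\e)_i=O_\prec(N^{-1/2})$, every entry of the correction is $O_\prec\big((f/q)N^{-1}\big)=O_\prec(N^{-1})$; thus passing from $H$ to $A=H+f\e\e^*$ perturbs each $G_{ij}$, and $\underline G$, by at most $O_\prec(N^{-1})$, which is absorbed into the stated errors. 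For (b), the crude bound $\theta\prec\Lambda$ coming from the diagonal expansion above is not sharp near the spectral edge; writing $\underline G-m$ (up to $O_\prec(N^{-1})$) as an average $\tfrac1N\sum_i(\text{$i$-th diagonal error})$ of $N$ weakly dependent random variables and applying the fluctuation-averaging lemma --- which exploits the near-independence of $G^{(i)}$ from the $i$-th row and column --- gains an extra factor and gives $\theta\prec\Lambda^2+q^{-1}\Lambda+(N\eta)^{-1}$. Combined with (a) and the near-edge stability $|m+z/2|\asymp\sqrt{\eta+\kappa}$ for the solution of \eqref{eq:m}, with $\kappa\deq|2-|E||$, this reads $\theta\prec q^{-2}(\eta+\kappa)^{-1}+(N\eta)^{-1}$ near the edge; combined with the trivial global bound $\theta\prec q^{-1}$ it yields the second claimed estimate. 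Step (c) is then a routine lattice argument in $\eta$.

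\textbf{Main obstacle.} Beyond the bookkeeping, the delicate step is obtaining the \emph{sharp} near-edge form of the averaged bound: one must track how the $q^{-1}$ error in $Z_i$ interacts with the degenerate stability of \eqref{eq:m} as $\kappa\to0$, and run the fluctuation-averaging step with edge-sensitive weights so as to produce the factor $q^{-2}(\eta+\kappa)^{-1}$ rather than merely $q^{-2}$. A secondary subtlety is verifying that the large deformation $f\asymp q\to\infty$ does not spoil the isotropic estimate $\e^*G^H\e=m+o(1)$ uniformly down to $\eta\sim N^{-1}$. Since all of this is contained in \cite[Theorem 2.9]{EKYY1}, with the deformation handled as indicated, in the paper we simply invoke that reference.
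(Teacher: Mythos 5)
The paper offers no proof of this statement---it is quoted verbatim from \cite[Theorem 2.9]{EKYY1}---so your decision to simply invoke that reference is exactly what the paper does, and your sketch (Schur complement, sparse large-deviation bounds for the quadratic form $Z_i$, fluctuation averaging for $\underline G$, bootstrap in $\eta$, with the rank-one shift $f\e\e^*$ removed via the Sherman--Morrison identity) is a faithful account of the cited proof. The only slightly optimistic step is the claim $(G^H\e)_i=O_{\prec}(N^{-1/2})$, which the isotropic law for $H$ only gives up to an additional $O_{\prec}(q^{-1}+(N\eta)^{-1/2})$; but the resulting correction to $G_{ij}$ is then $O_{\prec}(q^{-2}+N^{-1}+(N\eta)^{-1})$, still absorbed into the stated error, so nothing is affected.
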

\begin{corollary}\label{corollary:diff}
    \[\bigg|\frac{\partial^r G_{ij}}{\partial z^r}(z) - \delta_{ij}m^{(r)}(z)\bigg| \prec \frac{1}{\eta^{r}}\biggl(\frac{1}{q} + \frac{1}{\sqrt{N\eta}}\biggr)\]
uniformly in \(z=E+\mathrm{i}\eta\in\mathbf{S}\).
\end{corollary}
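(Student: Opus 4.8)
The plan is to deduce the corollary from the local semicircle law (Theorem \ref{thm:semiA}) via Cauchy's integral formula, using that $z\mapsto G_{ij}(z)-\delta_{ij}m(z)$ is holomorphic on the open upper half-plane: $G_{ij}$ has poles only at the real eigenvalues of $A$, and $m$ is holomorphic on $\C\setminus[-2,2]$. Fix $z=E+\mathrm{i}\eta\in\mathbf S$ and let $\Gamma$ be the positively oriented circle $\{w:|w-z|=\eta/2\}$. Every point of $\Gamma$ and of its interior has imaginary part at least $\eta/2>0$, so $G_{ij}-\delta_{ij}m$ is holomorphic there and $\Gamma$ encloses no eigenvalue of $A$; moreover $\Gamma$ lies in a bounded enlargement of $\mathbf S$ on which Theorem \ref{thm:semiA} still holds in the same form. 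Cauchy's formula gives
\[\frac{\partial^r G_{ij}}{\partial z^r}(z)-\delta_{ij}m^{(r)}(z)=\frac{r!}{2\pi\mathrm{i}}\oint_\Gamma\frac{G_{ij}(w)-\delta_{ij}m(w)}{(w-z)^{r+1}}\,\dd w\,,\]
and since $\Gamma$ has length $\pi\eta$ and $|w-z|=\eta/2$ there,
\[\bigg|\frac{\partial^r G_{ij}}{\partial z^r}(z)-\delta_{ij}m^{(r)}(z)\bigg|\leqslant\frac{r!\,2^{r}}{\eta^{r}}\max_{w\in\Gamma}\big|G_{ij}(w)-\delta_{ij}m(w)\big|\,.\]

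It then remains to show $\max_{w\in\Gamma}|G_{ij}(w)-\delta_{ij}m(w)|\prec\frac1q+\frac1{\sqrt{N\eta}}$, uniformly in $z\in\mathbf S$ and $i,j$. For a single $w\in\Gamma$, Theorem \ref{thm:semiA} bounds the left side by $\frac1q+\sqrt{\frac{\operatorname{Im}m(w)}{N\operatorname{Im}w}}+\frac1{N\operatorname{Im}w}$, and since $\operatorname{Im}m(w)\leqslant1$ (as $\varrho_{sc}\leqslant\frac1\pi$ on its support), $\operatorname{Im}w\geqslant\eta/2$, and $N\eta\geqslant1$ on all spectral scales used in this paper ($\eta\geqslant N^{-1+\tau}$), both of the last two terms are $\prec\frac1{\sqrt{N\eta}}$. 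To pass to the maximum over $\Gamma$, choose a net $\Gamma_0\subset\Gamma$ of cardinality $O(N^{5})$ with consecutive points at distance $\leqslant N^{-5}\eta$: a union bound over $\Gamma_0$ only multiplies the exceptional probability by $O(N^{5})$, hence preserves $\prec$, while the deterministic estimate $|\partial_w(G_{ij}(w)-\delta_{ij}m(w))|\leqslant\|(A-w)^{-1}\|^2+|m'(w)|\leqslant 8\eta^{-2}$ on $\Gamma$ controls the oscillation between any $w\in\Gamma$ and its nearest net point by $8N^{-5}\eta^{-1}\leqslant\frac1{\sqrt{N\eta}}$. This gives the claimed bound on $\max_{w\in\Gamma}$.

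Plugging the last display into the Cauchy estimate and absorbing the fixed constant $r!\,2^{r}$ into $\prec$ proves the corollary. The argument is routine; the two places needing a little care are (i) checking that $\Gamma$ stays in the region where Theorem \ref{thm:semiA} applies, which is immediate from $\operatorname{Im}w\geqslant\eta/2$ together with a harmless bounded enlargement of $\mathbf S$, and (ii) upgrading the $w$-pointwise stochastic domination in Theorem \ref{thm:semiA} to a bound uniform over $\Gamma$ --- the net-and-Lipschitz step above. I expect (ii) to be the part deserving the most careful writing, though it presents no genuine difficulty.
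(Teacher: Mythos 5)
Your proof is correct and follows essentially the same route as the paper: apply Cauchy's integral formula over the circle of radius $\eta/2$ centred at $z$, invoke the entrywise local law from Theorem \ref{thm:semiA} on that circle, and absorb the $1/(N\eta)$ term into $1/\sqrt{N\eta}$. The only difference is that the paper dispatches the issue of making the stochastic-domination bound uniform over the contour by citing \cite[Remark 2.7 and Lemma 10.2]{BK16}, whereas you spell out the standard net-plus-Lipschitz argument (which is precisely what that reference encapsulates); you also explicitly flag that dropping the $1/(N\eta)$ term requires $N\eta\gtrsim1$, a point the paper leaves implicit.
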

\begin{proof}
    For any $z=E+\rm{i}\eta\in\mathbf{S}$, let $\partial B$ be a circle of radius $\eta/2$ centered at $z$. By the Cauchy's integral formula, assuming we could apply stochastic domination inside the integral (See \cite[Remark 2.7 and Lemma 10.2]{BK16} for further details), then
    \begin{align*}
    \bigg|\frac{\partial^r G_{ij}}{\partial z^r}(z) - \delta_{ij}m^{(r)}(z)\bigg| &= \frac{1}{2\pi \rm{i}}\oint_{\partial B} \bigg|\frac{G_{ij}-\delta_{ij}m}{(z'-z)^{r+1}}\bigg|\mathrm{d}z'\\
    &\prec \frac{1}{2\pi}\oint_{\partial B} \bigg|\frac{q^{-1}+(Ny')^{-1/2}}{(z'-z)^{r+1}}\bigg|\mathrm{d}z'\prec \frac{1}{\eta^{r}}\biggl(\frac{1}{q} + \frac{1}{\sqrt{N\eta}}\biggr)\,. 
    \end{align*}
\end{proof}
We also recall the isotropic local law from \cite[Theorem 1.4]{HCJ}. Define the domain
\[
\mathbf{\widetilde{S}}\deq \{E+\mathrm{i}\eta:|E|\leqslant 4,N^{-1+\tau/100}\leqslant \eta\leqslant 4\}.\]
\begin{theorem}[Isotropic local law for A]
\label{thm:isoA}
    \[\bigg|\frac{1}{\sqrt{N}}\sum_{k=1}^NG_{ik}\bigg| \prec q^{-1}\]
uniformly in $z=E+\mathrm{i}\eta \in\mathbf{\widetilde{S}}$.
\end{theorem}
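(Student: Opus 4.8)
The plan is to strip off the rank-one deformation with the Sherman--Morrison formula and reduce the estimate to the isotropic local law for the undeformed sparse matrix $H$. Write $G^{H}\deq (H-z)^{-1}$. Since $A=H+f\e\e^{*}$, Sherman--Morrison gives
\[
G=G^{H}-\frac{f\,(G^{H}\e)(\e^{*}G^{H})}{1+f\,\e^{*}G^{H}\e}\,,
\qquad\text{hence}\qquad
\frac{1}{\sqrt{N}}\sum_{k=1}^{N}G_{ik}=(G\e)_{i}=\frac{(G^{H}\e)_{i}}{1+f\,\e^{*}G^{H}\e}\,.
\]
So it is enough to bound the numerator $(G^{H}\e)_{i}$ from above and the denominator $1+f\,\e^{*}G^{H}\e$ from below. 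Throughout I will use that on $\mathbf{\widetilde{S}}$ one has $\eta\geqslant N^{-1+\tau/100}$ and $q\geqslant N^{\tau/2}$, so that $q^{-1}+(N\eta)^{-1/2}\leqslant N^{-\tau/200}(1+o(1))=o(1)$ while $q\geqslant1$ and $N\eta\geqslant1$.

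The main input is the isotropic local law for the sparse matrix $H$ (Definition~\ref{def:sparse} with $f=0$): for deterministic unit vectors $\mathbf{u},\mathbf{v}$,
\[
\bigl|\mathbf{u}^{*}G^{H}\mathbf{v}-m(z)\,\mathbf{u}^{*}\mathbf{v}\bigr|\prec \Psi(z)\deq \frac{1}{q}+\sqrt{\frac{\operatorname{Im}m(z)}{N\eta}}+\frac{1}{N\eta}
\]
uniformly in $z\in\mathbf{\widetilde{S}}$ and in $\mathbf{u},\mathbf{v}$; since $\operatorname{Im}m\leqslant1$ and $N\eta\geqslant1$ on $\mathbf{\widetilde{S}}$ this gives $\Psi\prec q^{-1}+(N\eta)^{-1/2}$. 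I would quote this from the existing theory of local laws for sparse random matrices, or re-derive it by the polynomialization argument of \cite{BK16} combined with the moment bounds in Definition~\ref{def:sparse}. Applying it with $\mathbf{u}=\mathbf{v}=\e$ gives $\e^{*}G^{H}\e=m(z)+O_{\prec}(\Psi)$. By \eqref{eq:m} the function $m$ has no zeros, and it is continuous on the compact set $\{|E|\leqslant4,\ 0\leqslant\eta\leqslant4\}\supseteq\mathbf{\widetilde{S}}$ (using boundary values), so $|m(z)|\geqslant c_{0}>0$ uniformly on $\mathbf{\widetilde{S}}$ for some constant $c_{0}$; together with $|f|^{-1}\asymp q^{-1}\to0$ this yields $|1+fm(z)|=|f|\,|m(z)+f^{-1}|\asymp q$. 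Since moreover $|f|\,\Psi\prec q\bigl(q^{-1}+(N\eta)^{-1/2}\bigr)=1+q(N\eta)^{-1/2}\prec qN^{-\tau/200}=o(q)$, the fluctuation cannot destroy the lower bound, and I conclude $|1+f\,\e^{*}G^{H}\e|\asymp q$ on an event of probability at least $1-N^{-D}$.

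For the numerator I apply the isotropic law with $\mathbf{v}=\e$ and $\mathbf{u}$ the $i$-th coordinate vector, so that $\mathbf{u}^{*}\e=N^{-1/2}$ and
\[
(G^{H}\e)_{i}=m(z)N^{-1/2}+O_{\prec}(\Psi)=O_{\prec}\bigl(q^{-1}+(N\eta)^{-1/2}\bigr)\,,
\]
the $N^{-1/2}$ term being dominated by $\Psi$. Dividing by the denominator bound,
\[
\Bigl|\frac{1}{\sqrt{N}}\sum_{k=1}^{N}G_{ik}\Bigr|\prec\frac{q^{-1}+(N\eta)^{-1/2}}{q}=q^{-2}+q^{-1}(N\eta)^{-1/2}\leqslant q^{-1}\,,
\]
using $q\geqslant1$ and $N\eta\geqslant1$. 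A union bound over the $N$ values of $i$ preserves stochastic domination uniformly in $i$, and a standard net argument in $z$ — discretizing $\mathbf{\widetilde{S}}$ and using that $z\mapsto G(z)$ is Lipschitz on $\mathbf{\widetilde{S}}$ with a polynomially bounded constant — upgrades the bound to hold uniformly in $z\in\mathbf{\widetilde{S}}$.

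The step I expect to be the main obstacle is establishing the isotropic local law for $H$ with error $O_{\prec}(q^{-1}+(N\eta)^{-1/2})$ \emph{in the direction of the flat vector} $\e$: one must verify that the heavy tails of the sparse entries do not degrade the bound along this particular direction — equivalently, that the quadratic forms $\e^{*}(H^{k})\e$ remain of the order expected for a generic unit vector. Once that is in place, everything else is elementary manipulation with stochastic domination.
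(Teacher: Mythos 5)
The paper does not prove this statement at all: it is recalled verbatim from \cite[Theorem 1.4]{HCJ} as a black box, so there is no internal proof to compare yours against. Your Sherman--Morrison reduction is algebraically correct ($G\e=(1+f\,\e^{*}G^{H}\e)^{-1}G^{H}\e$ is exactly right), and the two estimates you need from it are handled properly: the lower bound $|1+f\,\e^{*}G^{H}\e|\asymp q$ follows as you say from $|m|\geqslant c_{0}$ and $f\Psi=o(q)$ on $\mathbf{\widetilde{S}}$ (with the correct care about turning a $\prec$-bound into a high-probability lower bound), and the numerator bound plus $q\geqslant 1$, $N\eta\geqslant 1$ gives the claimed $O_{\prec}(q^{-1})$. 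The honest caveat is the one you identify yourself: the entire weight of the argument rests on the isotropic local law for the \emph{centered} sparse matrix $H$ in the direction of the flat vector $\e$, i.e.\ $|\e^{*}G^{H}\e-m|\prec q^{-1}+(N\eta)^{-1/2}$ and $|(G^{H}\e)_{i}-mN^{-1/2}|\prec q^{-1}+(N\eta)^{-1/2}$. This is not derivable from the entrywise law of Theorem \ref{thm:semiA} (summing $N$ off-diagonal entries loses a factor $N\Psi$), and it is a theorem of essentially the same depth as the statement being proved — indeed the cited reference \cite{HCJ} proves precisely such an isotropic statement. So your proposal is best read not as an independent proof but as a correct and standard reduction of the deformed isotropic law for $A$ to the undeformed isotropic law for $H$; it is complete modulo that citation, which you correctly flag as the crux, and it has the merit of making explicit how the rank-one shift $f\e\e^{*}$ produces the extra factor $q^{-1}$ in the bound (through the denominator $1+f\,\e^{*}G^{H}\e\asymp q$) rather than leaving it opaque.
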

\section{Proof of Theorem \ref{thm:1}} \label{section:4}
In this section, we denote $z=x+\mathrm{i}y$ unless otherwise specified. Define the region
\[\mathbf{D} \deq \{x+\mathrm{i}y: |4-x^2|+|y| \geqslant \tau/100, |E| \leqslant 4, N^{-1+\tau/100}\leqslant y\leqslant 4\}\,.\]
\begin{lemma}
    \begin{equation} \label{lemma:cumu}
	\bb E G_{ii}-m \prec N^{-\tau/100}\cdot (1/q + 1/\sqrt{Ny}) \eqd \mathcal{T}
\end{equation}
uniformly in $z=x+\mathrm{i}y \in \mathbf{D}$.
\end{lemma}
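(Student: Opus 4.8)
The plan is to derive a self-consistent equation for $\bb E G_{ii}$ via the cumulant expansion and then close it using the stability of the equation $m^2+zm+1=0$. Write $\Psi\deq q^{-1}+(N\eta)^{-1/2}$, so that $\mathcal{T}=N^{-\tau/100}\Psi$, and note that on $\mathbf{D}$ one has $q^{-1}\leqslant N^{-\tau/2}\leqslant N^{-\tau/100}$ and $(N\eta)^{-1}\leqslant N^{-\tau/100}$; these two facts are exactly what make the error budget balance. Starting from $(A-z)G=I$, the $(i,i)$-entry reads $\sum_k A_{ik}G_{ki}=1+zG_{ii}$, so taking expectations gives $1+z\bb E G_{ii}=\sum_k\bb E[A_{ik}G_{ki}]$. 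To each summand I would apply Lemma \ref{lemma:cumulant} in the variable $A_{ik}$ (conditionally on the other entries), to a large fixed order $\ell=\ell(\tau)$, with derivatives computed from \eqref{eq:H_diff}. The order-zero term is $\tfrac{f}{N}\sum_k\bb E G_{ki}=O_\prec(f/(\sqrt{N}q))=O_\prec(N^{-1/2})\prec\mathcal{T}$ by the isotropic law (Theorem \ref{thm:isoA}). The order-one term, after discarding a $k=i$ contribution of size $O_\prec(1/N)$, equals $-\bb E[\underline G\,G_{ii}]-\tfrac1N\bb E[(G^2)_{ii}]$; since $(G^2)_{ii}=\partial_z G_{ii}=m'+O_\prec(\eta^{-1}\Psi)$ by Corollary \ref{corollary:diff} with $|m'|\asymp1$ on $\mathbf{D}$, the second term is $O_\prec(N^{-1}+(N\eta)^{-1}\Psi)=O_\prec(\mathcal{T})$. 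For $r=2$ every monomial of $\partial_{A_{ik}}^2 G_{ki}$ with $k\ne i$ carries an off-diagonal factor $G_{ki}$, so (using $\mathcal{C}_3(A_{ik})=O(N^{-1}q^{-1})$) this term is $O_\prec(q^{-1}\Psi)\prec\mathcal{T}$; for $3\leqslant r\leqslant\ell$ the bounds $|\partial_{A_{ik}}^r G_{ki}|\prec1$ and $\mathcal{C}_{r+1}(A_{ik})=O(N^{-1}q^{-(r-1)})$ give $O_\prec(q^{-(r-1)})=O_\prec(q^{-2})\prec\mathcal{T}$. Hence, uniformly in $i$ and $z\in\mathbf{D}$,
\[1+z\bb E G_{ii}+\bb E[\underline G\,G_{ii}]=\mathcal{R}_{\ell+1}+O_\prec(\mathcal{T})\,.\]

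The one delicate input is the remainder $\mathcal{R}_{\ell+1}$. The naive deterministic bound $|\partial_{A_{ik}}^{\ell+1}G_{ki}|\lesssim_\ell\eta^{-\ell-2}$ is far too lossy in the sparse regime, so instead I would establish a resolvent-perturbation (Woodbury) estimate: replacing the single entry $A_{ik}$ by any $x$ with $|x|\leqslant|A_{ik}|\prec q^{-1}$ changes each resolvent entry by only $O_\prec(q^{-1})$, so that $\sup_{|x|\leqslant|A_{ik}|}|\partial_x^{\ell+1}G^{(x)}_{ki}|\prec C^{\ell+1}$ with overwhelming probability (on the complementary event, of probability $\leqslant N^{-D}$, the crude deterministic bound still contributes negligibly). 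Feeding this and the moment bounds $\bb E|A_{ik}|^{2\ell+4}\leqslant C/(Nq^{2\ell+2})$, $\bb E|A_{ik}|^{\ell+2}\leqslant C/(Nq^\ell)$ from Definition \ref{def:sparse} into \eqref{remainder} yields $\sum_k\mathcal{R}_{\ell+1}^{(k)}\prec\sqrt{N}\,q^{-(\ell+1)}+q^{-\ell}$, which is $\prec N^{-100}$ once $\ell\geqslant 201/\tau$, since $q\geqslant N^{\tau/2}$. Thus $1+z\bb E G_{ii}+\bb E[\underline G\,G_{ii}]=O_\prec(\mathcal{T})$.

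To close the equation I write $\bb E[\underline G\,G_{ii}]=\bb E\underline G\cdot\bb E G_{ii}+\bb E[\langle\underline G\rangle\langle G_{ii}\rangle]$. On $\mathbf{D}$, where $\operatorname{Im}m\lesssim1$ and $\eta+|2-|E||\gtrsim1$ (so the scalar bound of Theorem \ref{thm:semiA} simplifies to $\underline G-m\prec q^{-2}+(N\eta)^{-1}$ while $G_{ii}-m\prec\Psi$), passing to expectations gives the variance bounds $\Var(G_{ii})\prec\Psi^2$ and $\Var(\underline G)\prec(q^{-2}+(N\eta)^{-1})^2$, whence by Cauchy--Schwarz $|\bb E[\langle\underline G\rangle\langle G_{ii}\rangle]|\prec(q^{-2}+(N\eta)^{-1})\Psi\leqslant2\mathcal{T}$. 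Therefore $1+z\bb E G_{ii}+\bb E\underline G\cdot\bb E G_{ii}=O_\prec(\mathcal{T})$ uniformly in $i$; averaging over $i$ (with $\tfrac1N\sum_i\bb E G_{ii}=\bb E\underline G$) gives $(\bb E\underline G)^2+z\,\bb E\underline G+1=O_\prec(\mathcal{T})$. Since $2m+z=(m^2-1)/m$ is bounded away from $0$ on $\mathbf{D}$ (as $z$ is bounded away from $\pm2$ there) and $\bb E\underline G=m+o(1)$ by the a priori law, the standard stability argument for this quadratic gives $\bb E\underline G=m+O_\prec(\mathcal{T})$. Plugging this back and using $\bb E G_{ii}=O(1)$ turns the equation into $1+(z+m)\bb E G_{ii}=O_\prec(\mathcal{T})$, i.e.\ $(z+m)(\bb E G_{ii}-m)=O_\prec(\mathcal{T})$ since $(z+m)m=-1$; as $|z+m|=|m|^{-1}\asymp1$ on $\mathbf{D}$ we conclude $\bb E G_{ii}-m\prec\mathcal{T}$.

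The main obstacle is precisely the remainder step of the second paragraph: all the other error terms are absorbed by routine sparse bookkeeping because each naturally carries a spare factor $q^{-1}$ or $(N\eta)^{-1}$ that fits inside $\mathcal{T}=N^{-\tau/100}\Psi$, whereas the remainder forces one to dispense with the deterministic derivative bound and to show instead that perturbing a single matrix entry --- even by its full typical magnitude $\sim q^{-1}$ --- leaves the entrywise resolvent bounds intact, which is what permits taking $\ell$ as large as $\tau$ demands so that $\mathcal{R}_{\ell+1}$ is genuinely negligible.
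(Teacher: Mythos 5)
Your proof is correct and follows essentially the same strategy as the paper's: the resolvent identity, a cumulant expansion in the matrix entries, absorption of the first-order term $-\bb E[\underline{G}G_{ii}]$ into the self-consistent equation, and bounds on the higher-order cumulant terms via the local and isotropic laws. The bookkeeping differs slightly (the paper works directly with $(z+\bb E\underline{G})(\bb E G_{ii}-m)$ and invokes the concentration estimate $\underline{G}-\bb E\underline{G}\prec\mathcal{T}$ of \cite{H19}, rather than deriving a quadratic for $\bb E\underline{G}$ by averaging over $i$ and closing it by stability), and you usefully make explicit the resolvent-perturbation bound on the cumulant remainder, a step the paper treats as standard by simply taking $\ell$ large.
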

Define, for $\lambda \in \bb R$,
\begin{align*}
e(\lambda) &\deq \text{exp} \{ \mathrm{i} \lambda(f_{ii}(A)-\bb E[f_{ii}(A)]) /\sqrt{V_{ii}(f)}\}\,,\\
    \psi_{ii}(\lambda)&\deq\bb E[e(\lambda)]\,.
\end{align*}
\begin{proposition}\label{prop:char}
\begin{equation}
\label{eq:char}\psi_{ii}'(\lambda)
=-\lambda \psi_{ii}(\lambda)\widetilde{V}_{ii}(f)/V_{ii}(f)
+ O_{\prec}(N^{-\tau/200})\,,
\end{equation} 
where $\widetilde{V}_{ii}(f)$ is defined as
\[
\frac{1}{\pi^2}\int_{\bb C^2}\partial_{\bar{z}}\tilde{f}(z)\partial_{\bar{z'}}\tilde{f}(z')\biggl[2N^{-1} m(z)m(z')\frac{m(z')-m(z)}{z'-z} + N\mathcal {C}_{4}(H_{12})m^3(z)m^3(z')\biggr]\mathrm{d}^2z'\mathrm{d}^2z\,.
\]
\end{proposition}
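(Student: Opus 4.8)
The plan is to differentiate $\psi_{ii}$, convert $f_{ii}(A)$ into the resolvent $G$ via the Helffer--Sjöstrand formula, and then close a self-consistent equation for $\E[\langle G_{ii}(z)\rangle e(\lambda)]$ by a cumulant expansion in the $i$-th row of $H$. Differentiating under the expectation gives $\psi_{ii}'(\lambda)=\frac{\iunit}{\sqrt{V_{ii}(f)}}\,\E[\langle f_{ii}(A)\rangle\,e(\lambda)]$, and Lemma~\ref{lemma:HS} — applied with a high-order almost-analytic extension $\tilde f(x+\iunit y)=\sum_{k\le n}\frac{(\iunit y)^k}{k!}f^{(k)}(x)$ — turns this into
\[
\psi_{ii}'(\lambda)=\frac{\iunit}{\pi\sqrt{V_{ii}(f)}}\int_{\C}\partial_{\bar z}(\tilde f\chi)(z)\,\E[\langle G_{ii}(z)\rangle e(\lambda)]\,\dd^2z .
\]
Because $f'\equiv0$ off $(-2+\tau,2-\tau)$ and $f$ is compactly supported, $\operatorname{supp}f\subseteq[-2+\tau,2-\tau]$, so the integrand vanishes near the spectral edges; and since $|\partial_{\bar z}\tilde f(z)|\lesssim|y|^n\eta_*^{-n-1}$ on $\operatorname{supp}\tilde f$ while $|\langle G_{ii}(z)\rangle|\le2/|y|$, choosing $n=n(\tau)$ large renders the contribution of $\{|y|<N^{-1+\tau/100}\}$ smaller than any power of $N$. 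Hence it suffices to evaluate $\E[\langle G_{ii}(z)\rangle e(\lambda)]$ for $z\in\mathbf D$.

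Fix $z\in\mathbf D$ and set $\partial_{ik}\deq\partial/\partial H_{ik}$. From $(AG)_{ii}=1+zG_{ii}$ and $A_{ik}=H_{ik}+f/N$, I would multiply $\sum_kA_{ik}G_{ki}-zG_{ii}=1$ by $e(\lambda)$, take $\E$, and subtract the same identity times $\psi_{ii}(\lambda)$; the $f/N$-contribution is negligible by Theorem~\ref{thm:isoA} and $f\asymp q$. Applying Lemma~\ref{lemma:cumulant} in $H_{ik}$ to $\E[H_{ik}\cdot G_{ki}e(\lambda)]$ and to $\E[H_{ik}\cdot G_{ki}]$ and subtracting gives
\[
z\,\E[\langle G_{ii}\rangle e(\lambda)]=\sum_k\sum_{r\ge1}\frac{\mathcal C_{r+1}(H_{ik})}{r!}\Bigl(\E\bigl[\partial_{ik}^r(G_{ki}e(\lambda))\bigr]-\E\bigl[\partial_{ik}^rG_{ki}\bigr]\psi_{ii}(\lambda)\Bigr)+(\text{negligible}),
\]
where $\partial_{ik}$ of a resolvent entry is given by \eqref{eq:H_diff} and the cumulant remainder is handled by \eqref{remainder}, $\mathcal C_r(A_{ij})=O(1/Nq^{r-2})$ and $|G_{ij}|\prec1$ for large $\ell$. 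Using $\partial_{ik}G_{ki}=-(G_{ki}^2+G_{kk}G_{ii})(1+\delta_{ik})^{-1}$, the $r=1$ term in which the derivative falls on $G_{ki}$ equals $-\E[\langle\underline GG_{ii}\rangle e(\lambda)]+(\text{negligible})$; expanding $\underline GG_{ii}$ about $m^2$ and using that the linear-statistic fluctuation $\E[\langle\underline G\rangle e(\lambda)]$ and $\frac1N\sum_kG_{ki}^2=\frac1N\partial_zG_{ii}$ are of lower order, this is $-m(z)\,\E[\langle G_{ii}(z)\rangle e(\lambda)]+(\text{negligible})$. Moving it to the left and using $z+m(z)=-1/m(z)$ leaves $\E[\langle G_{ii}(z)\rangle e(\lambda)]=-m(z)\bigl[\mathcal E_1(z)+\mathcal E_4(z)+(\text{negligible})\bigr]$ with two remaining pieces.

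The piece $\mathcal E_1(z)$ is the $r=1$ term with the derivative on $e(\lambda)$: there $\partial_{ik}e(\lambda)=\frac{\iunit\lambda}{\sqrt{V_{ii}(f)}}(\partial_{ik}f_{ii}(A))e(\lambda)$ and $\partial_{ik}G_{ii}(z')=-2(1+\delta_{ik})^{-1}G_{ii}(z')G_{ik}(z')$, so with $\mathcal C_2(H_{ik})(1+\delta_{ik})^{-1}\approx N^{-1}$, the resolvent identity $\sum_kG_{ki}(z)G_{ik}(z')=(G(z)G(z'))_{ii}=\frac{G_{ii}(z)-G_{ii}(z')}{z-z'}$, and the local law $G_{ii}\approx m$, this produces the $2N^{-1}m(z)m(z')\frac{m(z')-m(z)}{z'-z}$ half of $\widetilde V_{ii}(f)$. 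The piece $\mathcal E_4(z)$ is the $\mathcal C_4$ ($r=3$) term, dominated by the sub-term of $\partial_{ik}^3(G_{ki}e(\lambda))$ where one derivative hits $G_{ki}$ (giving $-G_{kk}(z)G_{ii}(z)$) and two hit $e(\lambda)$ (giving $\frac{\iunit\lambda}{\sqrt{V_{ii}(f)}}\partial_{ik}^2f_{ii}(A)$, with $\partial_{ik}^2G_{ii}(z')=2G_{ii}(z')^2G_{kk}(z')+O(|G_{ik}(z')|^2)$); after using the identically-distributed off-diagonal hypothesis and summing the local law, $\sum_k\mathcal C_4(H_{ik})G_{kk}(z)G_{kk}(z')=N\mathcal C_4(H_{12})m(z)m(z')+(\text{negligible})$, and since the multiplicity $\binom31\cdot2$ cancels $1/3!$, this reproduces the $N\mathcal C_4(H_{12})m(z)^3m(z')^3$ half. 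Inserting $\E[\langle G_{ii}(z)\rangle e(\lambda)]$ back into the integral for $\psi_{ii}'(\lambda)$ and recognizing the resulting double $\C$-integral as $\widetilde V_{ii}(f)$ yields $-\lambda\psi_{ii}(\lambda)\widetilde V_{ii}(f)/V_{ii}(f)$.

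The hard part is the error analysis: one must show that everything outside the three pieces above contributes $O_\prec(N^{-\tau/200})$ after division by $\sqrt{V_{ii}(f)}$ — the odd cumulants ($\mathcal C_3$ and odd $\mathcal C_{\ge5}$), the remaining sub-terms of the $\mathcal C_4$ and $\mathcal C_{\ge5}$ expansions (those carrying extra off-diagonal factors $G_{ik}$, the $k=i$ diagonal terms, and the $(\partial_{ik}f_{ii})^2$-type contributions to higher $H_{ik}$-derivatives of $e(\lambda)$), and all errors from replacing $G$ by $m$. This has to hold uniformly down to $y\asymp N^{-1+\tau/100}$, where the local-law error $q^{-1}+\sqrt{\operatorname{Im}m/(Ny)}+(Ny)^{-1}$ is only $N^{-\Theta(\tau)}$, so naive term-by-term bounds are too weak — they discard the cancellations in the oscillatory integrals $\int\partial_{\bar z}\tilde f\cdot m^k$, which are $O(\eta_*)$ rather than $O(\eta_*^{-1})$. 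Instead I would reorganize each error term, via the Helffer--Sjöstrand integral, into a genuine fluctuation of a related diagonal or linear resolvent statistic and bound it using the local law, Lemma~\ref{lemma:cumu}, and standard fluctuation estimates for resolvent polynomials; the $\mathcal C_3$ term in particular needs a second, iterated cumulant expansion of $\E[\langle G_{ki}(z)\rangle e(\lambda)]$ to reveal its extra smallness. This uniform error control, not the identification of the main terms, is where the real work lies.
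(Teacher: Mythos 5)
Your proposal follows the same overall strategy as the paper: Helffer--Sj\"ostrand to convert $f_{ii}(A)$ into a resolvent integral, cumulant expansion of $\sum_k\bb E[\langle e(\lambda)\rangle G_{ik}H_{ki}]$ along the $i$-th row, and isolation of the three leading contributions --- the self-consistent piece $-m(z)\,\bb E[\langle G_{ii}\rangle e(\lambda)]$, the $r=1$ piece with the derivative on $e(\lambda)$, and the $\mathcal C_4$ ($r=3$) piece with one derivative on $G_{ki}$ and two on $e(\lambda)$. The combinatorics, the use of $(z+m)^{-1}=-m$, and the identification of both halves of $\widetilde V_{ii}(f)$ are correct.

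Two differences relative to what the paper actually does are worth flagging. First, to control the region $|y|\lesssim N^{-1+\tau/100}$ you propose a high-order almost-analytic extension; the paper keeps the first-order $\tilde f=f+\mathrm{i}yf'$ and instead introduces the regularized phase $e_\alpha(\lambda)$, whose inner Helffer--Sj\"ostrand integral is truncated to $\Omega_\alpha=\{|y|>N^{\alpha-1}\}$. The latter is convenient because every $H$-derivative of $e_\alpha$ (eqs.~\eqref{e1}--\eqref{ep}) is then an integral over $\Omega_\alpha$ where Theorems~\ref{thm:semiA} and~\ref{thm:isoA} apply directly, and Lemma~\ref{lemma:integral_estimate} --- $\int_{\Omega_\alpha}\partial_{\bar z}\tilde f\,W\prec\eta_*^{1-s}+(N^{1-\alpha})^{s-1}$ whenever $W\prec|y|^{-s}$, $s<2$ --- is precisely the quantitative form of the ``cancellations in the oscillatory integrals'' you invoke informally. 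Second, your claim that the $\mathcal C_3$ term requires a second, iterated cumulant expansion of $\bb E[\langle G_{ki}\rangle e(\lambda)]$ is not what the paper does and is unnecessary: the three $\mathcal C_3$ sub-terms $L_{2,1},L_{2,2},L_{2,3}$ are bounded directly in eqs.~\eqref{eq:cumulantl21}--\eqref{eq:cumulantl23}, using only the entrywise local law and the isotropic bound $\sum_kG_{ik}\prec N^{1/2}/q$ (Theorem~\ref{thm:isoA}); after feeding the result into Lemma~\ref{lemma:integral_estimate} and dividing by $\sqrt{V_{ii}(f)}\asymp\sqrt{\eta_*/N}+\eta_*/q$, these are already $O_\prec(q^{-1})\prec N^{-\tau/2}$. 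As you acknowledge, the error bookkeeping is the bulk of the argument; in the paper it is Lemma~\ref{lemma:integral_estimate} together with the bounds \eqref{eq:c1}--\eqref{eq:K1} and the $G\to m$ replacement estimates \eqref{eq:T_a1}--\eqref{eq:finalb}, and no fluctuation estimates for resolvent polynomials beyond those two local laws are required.
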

\subsection{Proof of Lemma \ref{lemma:cumu}}
Note that $z+\bb E \underline{G}$ is bounded, so we have
\begin{align*}
|\bb EG_{ii}-m|&=O ( (z+\bb E\underline{G})(\bb EG_{ii}-m))\\&=O(z\bb EG_{ii}+\bb E\underline{G}\bb EG_{ii}+1+m(m-\bb E\underline{G}))\\
&=O(\bb E (GA)_{ii}+\bb E\underline{G}\bb EG_{ii})+O_\prec(\mathcal T)\\&= O(\bb E (GA)_{ii}+\bb E\underline{G}G_{ii})+O_\prec(\mathcal T)\,,\numberthis \label{eq:self}
\end{align*}
where \cite[Proposition 3.1]{H19} is applied for the last step, which says
\[ 
\underline{G}-\bb E \underline{G} \prec \frac{1}{Ny}+\frac{1}{\sqrt{N}q}\prec \cal T\
\] uniformly in $z\in \mathbf{D}$. Next, we perform a cumulant expansion on $\bb E (GA)_{ii}$. By Lemma \ref{lemma:cumulant},
\begin{align*}
\bb E (GA)_{ii}&=\sum_{k=1}^N \bb E G_{ik}H_{ki} + \sum_{k=1}^{N}\bb EG_{ik}f(ee^*)_{ki}\\&=N^{-1}\sum_{k=1}^N\bb E\frac{\partial G_{ik}}{\partial H_{ki}}(1+\delta_{ki})  -N^{-1} \bb E\frac{\partial G_{ii}}{\partial H_{ii}} +\sum_{r=2}^l \widetilde{L}_{r} + \sum_{k=1}^N \widetilde{R}_{l+1}^{(k)} + \sum_{k=1}^{N}\bb EG_{ik}f(ee^*)_{ki}\\
&\deq (\tilde{a}) -N^{-1} \bb E\frac{\partial G_{ii}}{\partial H_{ii}}+\sum_{r=2}^l\widetilde{L}_{r} + \sum_{k=1}^N \widetilde{R}_{l+1}^{(k)}+(\tilde{c})\,, \numberthis \label{eq:cumulantG}
\end{align*}
where
 \[ \widetilde{L}_r = \sum_{k=1}^N \frac{1}{r!} \mathcal{C}_{r+1}(H_{ki}) \bb E \left(\frac{\partial^r G_{ik}}{\partial H_{ki}^r} \right)\,.
\]
By Theorem \ref{thm:isoA} and the fact that $f\asymp q$, we have
\begin{align*}
        (\tilde{c}) = \sum_{k=1}^N\bb E[G_{ik}f(ee^*)_{ki}] &= \frac{f}{N}\bb E\biggl[\sum_{k=1}^N G_{ik}\biggr]\\
        &= O_{\prec}(N^{-1/2})\\
        &= O_{\prec}(\mathcal{T})\,. \numberthis \label{eq:cumulantGc}
        \end{align*}
By the differential rule \eqref{eq:H_diff}, Theorem \ref{thm:semiA} and Corollary \ref{corollary:diff}
\begin{align*}
    (\tilde{a}) &= N^{-1}\sum_{k=1}^N \bb E(-G_{ii}G_{kk}-G_{ik}G_{ki}) \\
    &= -\bb E(\underline{G}G_{ii})-N^{-1}\left[ \bb E(G^2)_{ii} -m'+m'\right]\\
    &= -\bb E(\underline{G}G_{ii}) +O_{\prec}\left( (Ny)^{-1}\left(\frac{1}{q} + \frac{1}{\sqrt{Ny}}\right) +N^{-1}y^{-1/2}\right) \\
    &= -\bb E(\underline{G}G_{ii}) +O_{\prec}(\mathcal{T})\,, \numberthis \label{eq:cumulantGa}
\end{align*}
where we used the fact that $|m'(z)| \prec y^{-1/2}$. Also, using \eqref{eq:H_diff},
\begin{equation} \label{eq:K}
    N^{-1} \bb E\frac{\partial G_{ii}}{\partial H_{ii}} = -N^{-1}G_{ii}G_{ii} \prec\mathcal{T}\,.
\end{equation}
Substituting \eqref{eq:cumulantGc}-\eqref{eq:K} into \eqref{eq:cumulantG}, and letting $l$ be large enough such that the remainder term is negligible, we have
\begin{equation} \label{0.15}
\begin{aligned}
\bb E G_{ii}-m &\prec \sum_{r=2}^{l}\sum_{k=1}^{N} \frac{1}{Nq^{r-1}}\bigg|\bb E\frac{\partial^r G_{ik}}{\partial H_{ki}^r}\bigg| +\mathcal{T}\\
&\prec \sum_{r=2}^{l}\max_{k:k\neq i} \frac{1}{q^{r-1}}\bigg|\bb E\frac{\partial^r G_{ik}}{\partial H_{ki}^r}\bigg| +\mathcal{T}\,.
	\end{aligned}
\end{equation}
Note that \eqref{0.15}, combined with Theorem \ref{thm:semiA} implies $\bb EG_{ii}-m\prec \mathcal{T}$.
\subsection{Proof of Proposition \ref{prop:char}}
By Lemma \ref{lemma:HS},
    \[f_{ii}(A) = \frac{1}{\pi}\int_{\bb C} \biggl(\frac{\partial_{\bar{z}}\tilde{f}(z)}{A-z}\biggr)_{ii} \mathrm{d}^2z\,,\]
  where $\tilde{f}(x+{\rm i}y) \deq \chi(y/\eta_*)(f(x)+{\rm i}yf'(x))$ is a quasi-analytic extension of f and $\chi(y)$ is a smooth cut-off function that is $1$ for $|y|\leq1$ and $0$ for $|y| \geqslant 2$. Fix small $\alpha > 0$ and define the domain
\[
    \Omega_{\alpha} \deq \{ (x,y) \in \bb R^2 : |y| > N^{\alpha-1}\}\,.
\]
Theorem \ref{thm:semiA} implies $|yG_{ii}| = O_{\prec}(|ym| + |y|q^{-1} + |y|^{1/2}N^{-1/2})$. Therefore, we have
\begin{align*}
 &\quad f_{ii}(A) - \bb E[f_{ii}(A)] 
  \\ &=\frac{1}{2\pi}\int_{\bb R^2} (\mathrm{i}y\chi(y/\eta_*)f''(x) + \mathrm{i}\eta^{-1}_*(f(x)+\mathrm{i}f'(x)y)\chi'(y/\eta_*))(G_{ii}-\bb EG_{ii})  \mathrm{d}x\mathrm{d}y \\
         &=\frac{1}{2\pi}\int_{\Omega_{\alpha}} (\mathrm{i}y\chi(y/\eta_*)f''(x) + \mathrm{i}\eta^{-1}_*(f(x)+\mathrm{i}f'(x)y)\chi'(y/\eta_*))(G_{ii}-\bb EG_{ii})  \mathrm{d}x\mathrm{d}y + O_{\prec}(N^{2\alpha -2} \|f''\|_{1}) \,.
     \numberthis \label{eq:e}
\end{align*}
Since $\bar{G}_{ii}(z) = G_{ii}(\bar{z})$, the error term in \eqref{eq:e} is real-valued, hence the integral over $\Omega_{\alpha}$ is also real-valued. Note also that $|e(\lambda)| \leq1$ as $f$ is a real-valued function, hence
\begin{align*}
     &\quad \psi_{ii}'(\lambda) \\&=\bb E[\mathrm{i}(f_{ii}(A) - \bb E[f_{ii}(A)])e(\lambda)/\sqrt{V_{ii}(f)}] \\
     &= \frac{\mathrm{i}}{2\pi \sqrt{V_{ii}(f)}}\int_{\Omega_{\alpha}} (\mathrm{i}y\chi(y/\eta_*)f''(x) + \mathrm{i}\eta^{-1}_*(f(x)+\mathrm{i}f'(x)y)\chi'(y/\eta_*))\mathcal{E}(z) \mathrm{d}x\mathrm{d}y +O_{\prec}\biggl(\frac{N^{2\alpha -2} ||f''||_{1}}{ \sqrt{V_{ii}(f)}} \biggr) \\
     &= \frac{\mathrm{i}}{2\pi \sqrt{V_{ii}(f)}}\int_{\Omega_{\alpha}} (\mathrm{i}y\chi(y/\eta_*)f''(x) + \mathrm{i}\eta^{-1}_*(f(x)+\mathrm{i}f'(x)y)\chi'(y/\eta_*))\mathcal{E}(z) \mathrm{d}x\mathrm{d}y +O_{\prec}(N^{-\tau})
\end{align*}
for $\alpha < \tau/4$,  where
\[ \mathcal{E}(z) \deq \bb E[e(\lambda) (G_{ii}(z) - \bb EG_{ii}(z))]\,.\]
Now, define
\[ e_{\alpha}(\lambda) \deq \text{exp}\biggl[ \frac{\mathrm{i}\lambda}{2\pi \sqrt{V_{ii}(f)}}\int_{\Omega_{\alpha}}(\mathrm{i}y\chi(y/\eta_*)f''(x) + \mathrm{i}\eta^{-1}_*(f(x)+\mathrm{i}f'(x)y)\chi'(y/\eta_*))(G_{ii}-\bb EG_{ii})\mathrm{d}x\mathrm{d}y \biggr]\,.\]
Using \eqref{eq:e} and the fact that $|e^{\mathrm{i} B}-e^{\mathrm{i}C} |\leqslant |B-C|$ for $B, C \in \bb R$, we have
\begin{equation}
\label{er}
|e(\lambda) - e_{\alpha}(\lambda)|  \prec N^{-\tau}\,.
\end{equation}
The following lemma will be useful.
\begin{lemma} \label{lemma:integral_estimate}
    Let $W(z)$ be a holomorphic function on $\bb C\setminus \bb R$ satisfying 
\[ W(z) \prec | \textup{Im}(z)|^{-s} \]
for some $0\leqslant s<2$, then 
\[\int_{\Omega_{\alpha}}(\mathrm{i}y\chi(y/\eta_*)f''(x) + \mathrm{i}\eta^{-1}_*(f(x)+\mathrm{i}f'(x)y)\chi'(y/\eta_*))W\mathrm{d}x\mathrm{d}y \prec \eta^{1-s}_*+ (N^{1-\alpha})^{s-1}\,.\]
\end{lemma}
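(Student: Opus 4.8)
\textbf{Proof proposal for Lemma \ref{lemma:integral_estimate}.}
The plan is to split the integral over $\Omega_\alpha$ according to the support of the two cutoff factors $\chi(y/\eta_*)$ and $\chi'(y/\eta_*)$, and in each region use the pointwise bound $|W(z)|\prec |y|^{-s}$ together with the fact that $f$ (hence $f''$) is supported on a fixed compact $x$-interval $I$ of size $O(1)$ (recall $f(x)=F((x-E)/\eta_*)$ with $F\in C_c^\infty$, so $\|f''\|_1 = O(\eta_*^{-1})$ and, more usefully here, $|f''(x)|\le C\eta_*^{-2}\mathbf 1_I(x)$, $|f(x)|\le C\mathbf 1_I(x)$, $|f'(x)|\le C\eta_*^{-1}\mathbf 1_I(x)$). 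The $\Omega_\alpha$ restriction enforces $|y|\ge N^{\alpha-1}$, which is what makes $\int |y|^{-s}$ finite near $y=0$ when $s\ge 1$.

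First I would treat the term with $\mathrm i y\chi(y/\eta_*)f''(x)$. On its support $|y|\le 2\eta_*$, and $x\in I$, so up to constants this contribution is bounded by
\[
\eta_*^{-2}\int_I\mathrm dx\int_{N^{\alpha-1}\le |y|\le 2\eta_*} |y|\cdot |y|^{-s}\,\mathrm dy
\prec \eta_*^{-2}\int_{N^{\alpha-1}}^{2\eta_*} y^{1-s}\,\mathrm dy\,.
\]
For $0\le s<2$ we have $1-s>-1$, so the $y$-integral converges at $0$ and equals $O(\eta_*^{2-s})$, giving a bound $O(\eta_*^{-s}\cdot\eta_*^{2-s}\cdot\eta_*^{... })$ — here I must be careful: the $\eta_*^{-2}$ prefactor against $\eta_*^{2-s}$ yields $O(\eta_*^{-s})$, which is \emph{not} what the lemma claims; the resolution is that the relevant contribution from this term is actually of order $\eta_*^{1-s}$ once one uses that $\|f''\|_1\asymp \eta_*^{-1}$ rather than the crude $\eta_*^{-2}|I|$, i.e. I should bound $\int_I|f''(x)|\,\mathrm dx\prec \eta_*^{-1}$ and pull the $y$-integral out, obtaining $\eta_*^{-1}\int_{N^{\alpha-1}}^{2\eta_*}y^{1-s}\,\mathrm dy\prec \eta_*^{-1}\cdot\eta_*^{2-s}=\eta_*^{1-s}$. (The same care is needed if $s\le 1$: then $\int_0^{2\eta_*}y^{1-s}\mathrm dy\asymp \eta_*^{2-s}$ regardless of the lower cutoff, so the $N^{\alpha-1}$ truncation is harmless and one still gets $\eta_*^{1-s}$.) Second, for the term with $\mathrm i\eta_*^{-1}(f(x)+\mathrm i f'(x)y)\chi'(y/\eta_*)$, note $\chi'(y/\eta_*)$ is supported on $\eta_*\le|y|\le 2\eta_*$, where $|y|^{-s}\asymp \eta_*^{-s}$ and there is no small-$y$ issue at all. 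On that annulus $|f(x)+\mathrm i f'(x)y|\le C\mathbf 1_I(x)$ (since $|f'(x)y|\le C\eta_*^{-1}\cdot 2\eta_*=O(1)$ on $I$), so this contribution is
\[
\prec \eta_*^{-1}\int_I\mathrm dx\int_{\eta_*\le|y|\le 2\eta_*}\eta_*^{-s}\,\mathrm dy\prec \eta_*^{-1}\cdot 1\cdot \eta_*^{-s}\cdot\eta_*=\eta_*^{-s}\,,
\]
which again is too lossy unless I instead use $\int_I|f(x)+\mathrm i f'(x)y|\,\mathrm dx=O(\eta_*^{... })$; in fact $\int_I|f|\,\mathrm dx=O(\eta_*)$ and $\int_I|f'|\,\mathrm dx=O(1)$, so $\int_I|f(x)+\mathrm i f'(x)y|\,\mathrm dx\prec \eta_*$ on the support $|y|\le 2\eta_*$, yielding $\eta_*^{-1}\cdot\eta_*\cdot\eta_*^{-s}\cdot\eta_*=\eta_*^{2-s}\prec\eta_*^{1-s}$ (since $\eta_*\le 1$). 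So both pieces are $O_\prec(\eta_*^{1-s})$, matching the first term in the claimed bound.

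Finally, I would explain where the $(N^{1-\alpha})^{s-1}$ term comes from: it is only relevant when $s>1$, where $\int_{N^{\alpha-1}}^{2\eta_*}y^{1-s}\,\mathrm dy$ is dominated by its lower endpoint and equals $\frac{1}{s-1}\big((N^{\alpha-1})^{2-s}-(2\eta_*)^{2-s}\big)/(2-s)$ — wait, for $s>1$ one has $1-s<0$ so $\int y^{1-s}=\frac{y^{2-s}}{2-s}$ and since $2-s>0$ the integral is still dominated by the \emph{upper} endpoint, giving $\eta_*^{2-s}$ again; the genuine blow-up is for $s$ close to $2$ where the constant $1/(2-s)$ degrades, and for the boundary case where one instead bounds $|W|$ not by $|y|^{-s}$ but must account for the hard cutoff — in the application $W=G_{ii}^2$ type objects with $s=2$ formally, handled by the strict inequality $s<2$. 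Concretely, the $(N^{1-\alpha})^{s-1}$ contribution arises from the region where we only have the trivial bound $|y W|\prec |y|^{1-s}\le (N^{\alpha-1})^{1-s}=(N^{1-\alpha})^{s-1}$ pointwise and integrate over the $O(1)\times O(\eta_*)$ box without gaining from the $x$-integral of $f''$, i.e. as a safety fallback when $s\ge 1$; I would write it as: the first term $\mathrm i y\chi f''$ contributes $\prec \eta_*^{-1}\int_{N^{\alpha-1}}^{2\eta_*}y^{1-s}\,\mathrm dy$, which for $1<s<2$ is $\prec \eta_*^{-1}\eta_*^{2-s}=\eta_*^{1-s}$ and for $s=1$ is $\prec \eta_*^{-1}\log(\eta_* N^{1-\alpha})\prec \eta_*^{1-s}(N^{1-\alpha})^{s-1}=1\cdot 1$, absorbing the log, and for $s<1$ is $\prec\eta_*^{1-s}$; in all cases bounded by $\eta_*^{1-s}+(N^{1-\alpha})^{s-1}$. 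The main obstacle I anticipate is \emph{bookkeeping the $L^1$ norms of $f,f',f''$ against the powers of $\eta_*$ correctly} so that the crude pointwise estimates do not lose a factor of $\eta_*^{-1}$; the holomorphicity of $W$ plays essentially no role beyond legitimizing the pointwise bound $|W(z)|\prec|\operatorname{Im}z|^{-s}$ inside the integral (via the usual argument that $\prec$-bounds can be integrated over polynomially-sized domains, cf.\ the proof of Corollary \ref{corollary:diff}), and the $s<2$ hypothesis is exactly the integrability threshold for $y^{1-s}$ near $0$ once paired with the $\eta_*^{-1}$ from $\|f''\|_1$.
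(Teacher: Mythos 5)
Your proof is correct but takes a genuinely different route from the paper's. The paper integrates by parts in $x$ on the first term, rewriting $\int f''(x)W\,\mathrm dx=-\int f'(x)W'\,\mathrm dx$ (using holomorphicity of $W$ so that $\partial_x W=W'$) and then invoking $|W'(z)|\prec|\operatorname{Im}z|^{-s-1}$ from Cauchy's integral formula; this trades the bad norm $\|f''\|_1\asymp\eta_*^{-1}$ for the good norm $\|f'\|_1\asymp 1$, at the cost of a worse $y$-singularity $\int y^{-s}\,\mathrm dy$ over $[N^{\alpha-1},2\eta_*]$, which is what generates the $(N^{1-\alpha})^{s-1}$ term when $s\geqslant 1$. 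You estimate directly without integrating by parts, keeping $\|f''\|_1\asymp\eta_*^{-1}$ and using that $\int_0^{2\eta_*}y^{1-s}\,\mathrm dy\asymp\eta_*^{2-s}$ converges at $0$ precisely because $s<2$; the product $\eta_*^{-1}\cdot\eta_*^{2-s}=\eta_*^{1-s}$ therefore gives a slightly sharper bound in which the $(N^{1-\alpha})^{s-1}$ term never actually appears. Both arguments are valid; yours is more elementary and, as you correctly observe, uses holomorphicity of $W$ only to legitimize integrating the $\prec$-bound over a polynomially-sized domain, whereas the paper's also leans on it via Cauchy's estimate. Two minor slips in your writeup, both harmless: the first-pass box estimate $\eta_*^{-2}\int_I\mathrm dx\,(\cdots)$ silently drops the $|I|\asymp\eta_*$ factor before you switch to $\|f''\|_1\asymp\eta_*^{-1}$ (you catch this yourself), and the chain $\eta_*^{-1}\cdot\eta_*\cdot\eta_*^{-s}\cdot\eta_*=\eta_*^{2-s}$ should read $\eta_*^{1-s}$; the final paragraph's back-and-forth about where $(N^{1-\alpha})^{s-1}$ ``comes from'' could simply be replaced by the clean observation that your direct estimate yields $\eta_*^{1-s}$ for all $0\leqslant s<2$, which is dominated by the claimed bound.
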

\begin{proof}
    Applying integration by parts, we have
\begin{align*} 
\bigg|\int_{\Omega_{\alpha}}y\chi(y/\eta_*)f''(x)W\mathrm{d}x\mathrm{d}y\biggr| &= \bigg|\int_{\Omega_{\alpha}}y\chi(y/\eta_*)f'(x)W'\mathrm{d}x\mathrm{d}y\biggr|\\&\prec \int_{\Omega_{\alpha}}|y^{-s}\chi(y/\eta_*)f'(x)|\mathrm{d}x\mathrm{d}y\\
&\prec \text{log}(N)((2\eta_*)^{1-s}+(N^{\alpha-1})^{1-s})||f'||_{1}\\
&\prec \eta^{1-s}_* + (N^{1-\alpha})^{s-1}\,, \numberthis \label{eq:integral_1}
\end{align*}
where we used $|W'|\prec |y|^{-s-1}$ by Cauchy's integral formula. Similarly,
\begin{align*}
\bigg|\int_{\Omega_{\alpha}} \mathrm{i}\eta^{-1}_*f(x)\chi'(y/\eta_*)W \mathrm{d}x\mathrm{d}y\bigg|
&\prec \text{log}(N)((2\eta_*)^{1-s}+\eta_*^{1-s})\eta^{-1}_*||f||_{1}\\
&\prec \eta^{1-s}_* \,,\numberthis \label{eq:integral_2}
\end{align*}
and
\begin{align*}
\bigg|\int_{\Omega_{\alpha}}\eta^{-1}_*f'(x)y\chi'(y/\eta_*)W \mathrm{d}x\mathrm{d}y\bigg| 
&\prec \eta^{-1}_*((2\eta_*)^{2-s}+\eta^{2-s}_*)||f'||_{1}\\
&\prec \eta^{1-s}_* \,.\numberthis \label{eq:integral_3}
\end{align*}
Adding \eqref{eq:integral_1}-\eqref{eq:integral_3} completes the proof.
\end{proof}

    Let $\mathcal{E}_{\alpha}(z)$ be the same as $\mathcal{E}(z)$ but with $e_{\alpha}(\lambda)$ in place of $e(\lambda)$. Applying Lemma \ref{lemma:integral_estimate} with $W=\mathcal{E}-\mathcal{E}_{\alpha} \prec N^{-\tau}(q^{-1}+(Ny)^{-1/2})$, we have

\begin{align*}\label{ee}
\biggl|\int_{\Omega_{\alpha}}(\mathrm{i}y\chi(y/\eta_*)f''(x) + \mathrm{i}\eta^{-1}_*(f(x)+\mathrm{i}f'(x)y)\chi'(y/\eta_*))(\mathcal{E}-\mathcal{E}_{\alpha})\mathrm{d}x\mathrm{d}y\biggr| &\prec N^{-\tau}(\eta_*/q+(\eta_*/N)^{1/2}) \\
&\asymp N^{-\tau}\sqrt{V_{ii}(f)}\,.
\end{align*}
Hence,
\[\psi_{ii}'(\lambda) = \frac{\mathrm{i}}{2\pi \sqrt{V_{ii}(f)}}\int_{\Omega_{\alpha}}(\mathrm{i}y\chi(y/\eta_*)f''(x) + \mathrm{i}\eta^{-1}_*(f(x)+\mathrm{i}f'(x)y)\chi'(y/\eta_*))\mathcal{E}_{\alpha}(z) \mathrm{d}x\mathrm{d}y +O_{\prec}(N^{-\tau})\,.\]
Using Lemma \ref{lemma:integral_estimate}, we could estimate the derivatives of $e_{\alpha}(\lambda)$ w.r.t. entries of $H$. We have, for $k\neq i$, that
\begin{align*}
    \frac{\partial e_{\alpha}(\lambda)}{\partial H_{ki}} &= e_{\alpha}(\lambda)\biggl[ \frac{-2\mathrm{i}\lambda}{\pi(1+\delta_{ki}) \sqrt{V_{ii}(f)}}\int_{\Omega_{\alpha}}\partial_{\bar{z}}\tilde{f}(z)G_{ii}G_{ik}\mathrm{d}^2z \biggr]\\
     &\prec 1 \,,\numberthis \label{e1}\\
    \sum_{k=1}^N \frac{\partial e_{\alpha}(\lambda)}{\partial H_{ki}} &= e_{\alpha}(\lambda)\biggl[ \frac{-2\mathrm{i}\lambda}{\pi(1+\delta_{ki})\sqrt{V_{ii}(f)}}\int_{\Omega_{\alpha}}\partial_{\bar{z}}\tilde{f}(z)G_{ii}\sum_{k=1}^N G_{ik}\mathrm{d}^2z \biggr] \\
    &\prec N^{1/2}\,, \numberthis \label{e2}\\
    \frac{\partial^2 e_{\alpha}(\lambda)}{\partial H_{ki}^2} &= \begin{aligned}[t]
        e_{\alpha}(\lambda)\biggl[ &\frac{-2\mathrm{i}\lambda}{\pi(1+\delta_{ki}) \sqrt{V_{ii}(f)}}\int_{\Omega_{\alpha}} \partial_{\bar{z}}\tilde{f}(z)G_{ii}G_{ik}\mathrm{d}^2z \biggr]^2 \\ &  + e_{\alpha}(\lambda)\frac{2\mathrm{i}\lambda}{\pi(1+\delta_{ki})^2 \sqrt{V_{ii}(f)}}\int_{\Omega_{\alpha}}\partial_{\bar{z}}\tilde{f}(z)(G_{ii}^2G_{kk}+3G_{ki}^2G_{ii})\mathrm{d}^2z
        \end{aligned}
    \\
    &= e_{\alpha}(\lambda)\frac{2\mathrm{i}\lambda}{\pi(1+\delta_{ki})^2 \sqrt{V_{ii}(f)}}\int_{\Omega_{\alpha}}\partial_{\bar{z}}\tilde{f}(z)m^3 \mathrm{d}^2z  + O_{\prec}(1)\,, \numberthis \label{e3}
\end{align*}
where we have applied Theorem \ref{thm:semiA} for \eqref{e1} and \eqref{e3}, and Theorem \ref{thm:isoA} for \eqref{e2}. In general, we have 
\begin{equation}
\label{ep}
\bigg|\frac{\partial^r e_{\alpha}(\lambda)}{\partial H_{ki}^r}\bigg| \prec \eta_*V_{ii}(f)^{-1/2}\,.
\end{equation}

Next, we would extract the leading-order terms by performing a cumulant expansion on $\mathcal{E}_{\alpha}$.
\begin{lemma}\label{lemma:E}
\begin{equation}
\label{eq:lemmastat}
\begin{aligned}(z+m)\mathcal{E}_{\alpha}(z) &= N^{-1} \bb E e_{\alpha}(\lambda)\biggl[ \frac{-2\mathrm{i}\lambda}{\pi \sqrt{V_{ii}(f)}}\int_{\Omega_{\alpha}}\partial_{\bar{z'}}\tilde{f}(z')G_{ii}(z')(G(z')G(z))_{ii}\mathrm{d}^2z' \biggr]\\&-\frac{\mathrm{i}\lambda N}{\pi\sqrt{V_{ii}(f)}} \mathcal {C}_{4}(H_{12}) \bb Ee_{\alpha}(\lambda)\biggl(m^2(z)\int_{\Omega_{\alpha}}\partial_{\bar{z'}}\tilde{f}(z')m^3(z') \mathrm{d}^2z' \biggr) \\&+ O_{\prec}\biggl( \frac{1}{q^2} + \frac{1}{q^2(y + |2-|x||)}+\frac{1}{Ny} + \frac{1}{\sqrt{N}}\biggr)
\end{aligned}
\end{equation}
uniformly in $z=x+\mathrm{i}y \in \Omega_{\alpha}$.
\end{lemma}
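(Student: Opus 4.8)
The strategy is to produce a self-consistent equation for $\mathcal E_\alpha$, exactly as in the proof of Lemma~\ref{lemma:cumu} but carrying the extra bounded factor $e_\alpha(\lambda)$. Starting from $(A-z)G=I$ I write $z\langle G_{ii}\rangle=\langle(AG)_{ii}\rangle=\langle\sum_k H_{ik}G_{ki}\rangle+\frac fN\langle\sum_k G_{ki}\rangle$, multiply by $e_\alpha(\lambda)$, and take expectations:
\[
z\,\mathcal E_\alpha(z)=\bb E\Big[e_\alpha\,\big\langle{\textstyle\sum_k}H_{ik}G_{ki}\big\rangle\Big]+\frac fN\,\bb E\Big[e_\alpha\,\big\langle{\textstyle\sum_k}G_{ki}\big\rangle\Big].
\]
Since $|e_\alpha|\le1$ and $f\asymp q\le N^{1/2-\tau/2}$, Theorem~\ref{thm:isoA} bounds the rank-one term by $O_\prec(fN^{-1/2}q^{-1})=O_\prec(N^{-1/2})$, which is within the error of \eqref{eq:lemmastat}. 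To the remaining term I apply Lemma~\ref{lemma:cumulant} in the variable $h=H_{ik}$ to $\sum_k\bb E[H_{ik}\cdot(e_\alpha G_{ki})]$ and, separately, to $\bb E[e_\alpha]\,\bb E[\sum_k H_{ik}G_{ki}]$, and subtract; with $\ell$ large and $t=N^{-1/2+\epsilon}$ the remainders are $O_\prec(N^{-D})$ by the moment bounds of Definition~\ref{def:sparse}, the differential rule \eqref{eq:H_diff}, and the crude bound $\|G(z)\|\le(\operatorname{Im}z)^{-1}$ for the perturbed resolvent, as in Lemma~\ref{lemma:cumu}. The $r=0$ terms vanish since $\mathcal C_1(H_{ik})=0$. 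For $r\ge1$ I use Leibniz, $\partial_{ik}^r(e_\alpha G_{ki})=\sum_{j=0}^r\binom rj(\partial_{ik}^je_\alpha)(\partial_{ik}^{r-j}G_{ki})$, and split into the pieces $j=0$ and $j\ge1$.

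The $j=0$ pieces, after subtracting $\bb E[e_\alpha]\bb E[\sum_kH_{ik}G_{ki}]$, collapse to $\sum_k\frac1{r!}\mathcal C_{r+1}(H_{ik})\,\bb E[\langle e_\alpha\rangle\langle\partial_{ik}^rG_{ki}\rangle]$. For $r=1$, $\partial_{ik}G_{ki}=-(G_{ki}^2+G_{kk}G_{ii})$ ($k\ne i$), so summation gives $-\bb E[\langle e_\alpha\rangle\langle\underline G\,G_{ii}\rangle]+O_\prec((Ny)^{-1})$; writing $\underline G=\bb E\underline G+\langle\underline G\rangle$, $G_{ii}=\bb EG_{ii}+\langle G_{ii}\rangle$ and using $\bb E\langle e_\alpha\rangle=0$, the bound $\langle\underline G\rangle\prec(Ny)^{-1}+(\sqrt Nq)^{-1}$ from \cite[Prop.~3.1]{H19}, and $\bb E\underline G=m+O_\prec(q^{-2}(y+|2-|x||)^{-1}+(Ny)^{-1})$ from Theorem~\ref{thm:semiA}, this equals $-m\,\mathcal E_\alpha(z)$ up to the errors of \eqref{eq:lemmastat}. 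This is the term that cancels the $+m\mathcal E_\alpha$ which appears in passing from $z\mathcal E_\alpha$ to $(z+m)\mathcal E_\alpha$. For $r\ge2$, every summand of $\langle\partial_{ik}^rG_{ki}\rangle$ fluctuates at scale $\prec q^{-1}+(Ny)^{-1/2}$, so with $\mathcal C_{r+1}(H_{ik})=O(N^{-1}q^{1-r})$ the sum over $k$ is $O_\prec(q^{-2}+q^{-1}(Ny)^{-1/2}+\cdots)$, admissible.

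Among the $j\ge1$ pieces, exactly two contribute. The piece $(r,j)=(1,1)$, equal to $\sum_k\frac1N\bb E[(\partial_{ik}e_\alpha)G_{ki}]$, gives the first main term of \eqref{eq:lemmastat} after inserting $\partial_{ik}e_\alpha=-\frac{2\mathrm i\lambda}{\pi(1+\delta_{ki})\sqrt{V_{ii}}}e_\alpha\int_{\Omega_\alpha}\partial_{\bar{z'}}\tilde f(z')G_{ik}(z')G_{ii}(z')\,\mathrm d^2z'$ (from \eqref{e1}) and using $\sum_kG_{ik}(z')G_{ki}(z)=(G(z')G(z))_{ii}$, up to $O_\prec(N^{-1/2})$ from the $k=i$ summand. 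The piece $(r,j)=(3,2)$, equal to $\frac12\sum_k\mathcal C_4(H_{ik})\bb E[(\partial_{ik}^2e_\alpha)(\partial_{ik}G_{ki})]$, carries the fourth cumulant: inserting the leading form $\partial_{ik}^2e_\alpha=\frac{2\mathrm i\lambda}{\pi\sqrt{V_{ii}}}e_\alpha\int_{\Omega_\alpha}\partial_{\bar{z'}}\tilde f(z')m^3(z')\,\mathrm d^2z'+O_\prec(1)$ (from \eqref{e3}, $k\ne i$), $\partial_{ik}G_{ki}=-G_{ii}(z)G_{kk}(z)+O_\prec(q^{-1}+(Ny)^{-1/2})$, the identity $\sum_kG_{kk}(z)=N\underline G(z)$ (which keeps the factor $N$ cancelling the $N^{-1}$ inside $\mathcal C_4$), and $\mathcal C_4(H_{ik})=\mathcal C_4(H_{12})$ for all $k\ne i$ (this is where the identically-distributed hypothesis enters, letting $\mathcal C_4$ come out of the sum), then replacing $\underline G(z),G_{ii}(z)$ by $m(z)$ — the errors absorbed via $N\mathcal C_4(H_{12})=O(q^{-2})$ — produces exactly $-\frac{\mathrm i\lambda N}{\pi\sqrt{V_{ii}}}\mathcal C_4(H_{12})\,\bb E[e_\alpha]\,m^2(z)\int_{\Omega_\alpha}\partial_{\bar{z'}}\tilde f(z')m^3(z')\,\mathrm d^2z'$. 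Every remaining $(r,j)$ with $r\ge2$ is negligible: for $r\ge4$ the crude bounds $|\partial_{ik}^je_\alpha|\prec\eta_*V_{ii}^{-1/2}\lesssim q$ (from \eqref{ep}) and $|\partial_{ik}^{r-j}G_{ki}|\prec1$, with $\mathcal C_{r+1}(H_{ik})=O(N^{-1}q^{1-r})$, give $O_\prec(q^{-2})$; for $r=2,3$ one uses that every surviving $\partial_{ik}^me_\alpha$ ($m\ge1$) not in its $\int\partial_{\bar{z'}}\tilde f\,m^3$-leading form carries a genuinely off-diagonal $G_{ik}(z')$, so the $k$-sum collapses through $\sum_kG_{ik}(z')(\cdots)=(G(z')G(z))_{ii}$ or $\frac1{\sqrt N}\sum_kG_{ik}(z')=O_\prec(q^{-1})$ (Theorem~\ref{thm:isoA}), bringing each such piece to $O_\prec(q^{-2}+N^{-1/2}+\cdots)$.

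The main obstacle is this last bookkeeping, together with the extraction of the $\mathcal C_4$ term: one must verify that the fourth-cumulant contribution arises from the single Leibniz term $(r,j)=(3,2)$ and from no other among the many terms produced by three derivatives acting on products of $e_\alpha$ with Green-function entries, and that every extra term created by the rank-one shift $f\e\e^*$ is $O_\prec(N^{-1/2})$ via the isotropic law $\frac1{\sqrt N}\sum_kG_{ik}\prec q^{-1}$; this is the precise origin of the ``new terms arising from the large expectation of $A$'' mentioned in the introduction. Everything else is the standard resolvent-identity-plus-cumulant-expansion scheme, fed by Theorems~\ref{thm:semiA} and~\ref{thm:isoA} and the derivative estimates \eqref{e1}--\eqref{ep}.
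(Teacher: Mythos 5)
Your proposal is correct and follows essentially the same route as the paper: expand $z\mathcal E_\alpha(z)=\sum_k\bb E[\langle e_\alpha\rangle G_{ik}A_{ki}]$, dispose of the rank-one term via Theorem~\ref{thm:isoA}, and do a cumulant expansion in $H_{ki}$, with the $(r,j)=(1,0)$ Leibniz piece producing $-m\mathcal E_\alpha$ (which is moved to the left side), $(1,1)$ producing the first main term, $(3,2)$ producing the $\mathcal C_4$ term, and everything else absorbed into the stated error using the derivative bounds \eqref{e1}--\eqref{ep} together with Theorems~\ref{thm:semiA} and~\ref{thm:isoA}. The paper organizes the same computation with the labels $(a),(b),(c)$ and $L_{2,1},\dots,L_{3,4}$ rather than by Leibniz index $(r,j)$, and replaces $G_{kk}\to m$ entrywise where you use $\sum_kG_{kk}=N\underline G$, but these are cosmetic differences.
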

\begin{proof} By Lemma \ref{lemma:cumulant},
\begin{align*}
    z\mathcal{E}_{\alpha}(z) &= \bb E[e_{\alpha}(\lambda)z(G_{ii}(z) - \bb EG_{ii}(z))] \\
    &= \sum_{k=1}^N\bb E[e_{\alpha}(\lambda)\langle G_{ik}A_{ki}\rangle] \\
    &= \sum_{k=1}^N\bb E[\langle e_{\alpha}(\lambda) \rangle G_{ik}H_{ki}] + \sum_{k=1}^N\bb E[\langle e_{\alpha}(\lambda)\rangle G_{ik}f(ee^*)_{ki}]\\
    &=  N^{-1}\sum_{k=1}^N \bb E\langle e_{\alpha}(\lambda) \rangle\frac{\partial G_{ik}}{\partial H_{ki}}(1+\delta_{ki}) + N^{-1}\sum_{k=1}^N \bb E\frac{\partial \langle e_{\alpha}(\lambda) \rangle}{\partial H_{ki}}G_{ik}(1+\delta_{ki}) \\ & - N^{-1} \bb E\frac{\partial \langle e_{\alpha}(\lambda)\rangle G_{ii}}{\partial H_{ii}} +\sum_{r=2}^l
  L_{r} + \sum_{k=1}^N R_{l+1}^{(k)} + \sum_{k=1}^N\bb E[\langle e_{\alpha}(\lambda)\rangle G_{ik}f(ee^*)_{ki}] \\
 &\deq  (a) + (b) - N^{-1} \bb E\frac{\partial \langle e_{\alpha}(\lambda) \rangle G_{ii}}{\partial H_{ii}} +\sum_{r=2}^l
  L_{r} + \sum_{k=1}^N R_{l+1}^{(k)} + (c)\,,\numberthis \label{eq:cumulantE}
\end{align*}
where
 \[ L_r = \sum_{k=1}^N \frac{1}{r!} \mathcal{C}_{r+1}(H_{ki}) \bb E\biggl(\frac{\partial^r \langle e_{\alpha}(\lambda)\rangle G_{ik}}{\partial H_{ki}^r} \biggr)\,.
\]
By Theorem \ref{thm:isoA} and the fact that $f\asymp q$, we have
    \begin{align*}
        (c) = \sum_{k=1}^N\bb E[\langle e_{\alpha}(\lambda)\rangle G_{ik}f(ee^*)_{ki}] &= \frac{f}{N}\bb E\biggl[\langle e_{\alpha}(\lambda)\rangle \sum_{k=1}^N G_{ik}\biggr]\\
        &\prec N^{-1/2}\,. \numberthis \label{eq:c1}
    \end{align*}
  By the differential rule \eqref{eq:H_diff}, Theorem \ref{thm:semiA} and Corollary \ref{corollary:diff}
\begin{align*}
    (a) &= N^{-1} \sum_{k=1}^{N} \bb E\langle e_{\alpha}(\lambda) \rangle(-G_{ii}G_{kk}-G_{ik}G_{ki}) \\
    &= -N^{-1} \bb E \langle e_{\alpha}(\lambda) \rangle(G_{ii} \text{Tr}G+(G^2)_{ii}) \\
    &=-\bb E \langle e_{\alpha}(\lambda)\rangle   (\underline{G}G_{ii}-G_{ii} m+G_{ii} m)  -N^{-1} \bb Ee_{\alpha}(\lambda) \langle (G^2)_{ii} - m' \rangle\\
    &=-\bb E e_{\alpha}(\lambda)\langle G_{ii} \rangle m+O_{\prec}\biggl( \frac{1}{q^2}+\frac{1}{q^2(y + |2-|x||)}+\frac{1}{Ny} +(Ny)^{-1}\biggl(\frac{1}{q}+\frac{1}{\sqrt{Ny}}\biggr)\biggr)\,. \numberthis \label{eq:cumulanta} \\
    \end{align*}
Similarly,
    \begin{align*}
        (b) &= N^{-1}\sum_{k=1}^N \bb E\frac{\partial \langle e_{\alpha}(\lambda) \rangle}{\partial H_{ki}}G_{ik}(1+\delta_{ki}) \\
        &= N^{-1}\sum_{k=1}^N \bb E e_{\alpha}(\lambda)\biggl[ \frac{-2\mathrm{i}\lambda}{\pi \sqrt{V_{ii}(f)}}\int_{\Omega_{\alpha}}\partial_{\bar{z'}}\tilde{f}(z')G_{ii}(z')G_{ik}(z')G_{ik}(z)\mathrm{d}^2z' \biggr]\\
        &=N^{-1} \bb E e_{\alpha}(\lambda)\biggl[ \frac{-2\mathrm{i}\lambda}{\pi \sqrt{V_{ii}(f)}}\int_{\Omega_{\alpha}}\partial_{\bar{z'}}\tilde{f}(z')G_{ii}(z')(G(z')G(z))_{ii}\mathrm{d}^2z'\biggr]\,.  \numberthis \label{eq:cumulantb}
    \end{align*}
$L_2$ consists of terms of three different types, depending on how many derivatives acts on $G_{ik}$. We now show that they are all non-leading terms. Using \eqref{e1} - \eqref{e3}, together with Theorem \ref{thm:semiA} and Theorem \ref{thm:isoA}, we have that for some constants $a_{1}, a_2$ depending on $\delta_{ki}$
\begin{align*}
   L_{2,1} &= \sum_{k=1}^N \frac{1}{2} \mathcal{C}_{3}(H_{ki}) \bb E \biggl(\langle e(\lambda)\rangle \frac{\partial^2 G_{ik}}{\partial H_{ki}^2} \biggr)\\
   &= O((Nq)^{-1})\sum_{k=1}^N \bb E \langle e(\lambda)\rangle \biggl(a_1G_{ii}G_{ik}G_{kk} + a_{2}G_{ik}G_{ki}G_{ki}\biggr)\\
   &= O((Nq)^{-1})\sum_{k=1}^N \bb E m^2e(\lambda) a_1 \langle 
 G_{ik} \rangle + O_{\prec}\biggl( q^{-1}\biggl(\frac{1}{q}+\frac{1}{\sqrt{Ny}}\biggr)^2\biggr)\\
 &\prec  N^{-1/2}q^{-2}+q^{-1}\biggl(\frac{1}{q}+\frac{1}{\sqrt{Ny}}\biggr)^2 \,,\numberthis \label{eq:cumulantl21}\\
     L_{2,2} &= \sum_{k=1}^N  \mathcal{C}_{3}(H_{ki}) \bb E \biggl(\frac{\partial\langle e_{\alpha}(\lambda)\rangle}{\partial H_{ki}} \frac{\partial G_{ik}}{\partial H_{ki}} \biggr)\\
   &= O((Nq)^{-1})\sum_{k=1}^N \bb E (1+\delta_{ki})^{-1}\frac{\partial e_{\alpha}(\lambda)}{\partial H_{ki}}(G_{ii}G_{kk}+G_{ik}G_{ki})\\
    &= O((Nq)^{-1})\sum_{k=1}^N \bb E  \frac{\partial e_{\alpha}(\lambda)}{\partial H_{ki}}G_{ii}G_{kk} +O_{\prec}\biggl(q^{-1}\biggl(\frac{1}{q}+\frac{1}{\sqrt{Ny}}\biggr)^2\biggr) \\
    &= O((Nq)^{-1}) m^2\sum_{k=1}^N\bb E\frac{\partial e_{\alpha}(\lambda)}{\partial H_{ki}} +O_{\prec}\biggl(q^{-1}\biggl(\frac{1}{q}+\frac{1}{\sqrt{Ny}}\biggr)  \biggr)\\
    &\prec q^{-1}\biggl(\frac{1}{q}+\frac{1}{\sqrt{Ny}}\biggr) \,,\numberthis \label{eq:cumulantl22}\\
    L_{2,3} &=  \sum_{k=1}^N \frac{1}{2} \mathcal{C}_{3}(H_{ki}) \bb E \biggl(\frac{\partial^2\langle e_{\alpha}(\lambda)\rangle}{\partial H_{ki}^2} G_{ik}\biggr)\\
        &=\begin{aligned}[t]
            \frac{1}{2}\sum_{k=1}^N 
 \mathcal{C}_{3}(H_{ki}) \bb E \biggl( e_{\alpha}(\lambda)\frac{2\mathrm{i}\lambda}{\pi(1+\delta_{ki})^2 \sqrt{V_{ii}(f)}}\int_{\Omega_{\alpha}}\partial_{\bar{z'}}\tilde{f}(z')&m^3(z') \mathrm{d}^2z' G_{ik}\biggr)\\&+O_{\prec}\biggl(q^{-1}\biggl(\frac{1}{q}+\frac{1}{\sqrt{Ny}}\biggr)\biggr)
 \end{aligned}\\
 &\prec  \frac{\eta_*}{q^{2}N^{1/2}\sqrt{V_{ii}(f)}}+q^{-1}\biggl(\frac{1}{q}+\frac{1}{\sqrt{Ny}}\biggr) \\
 &\prec q^{-1}\biggl(\frac{1}{q}+\frac{1}{\sqrt{Ny}}\biggr)\,.\numberthis \label{eq:cumulantl23}
\end{align*}
Similarly for $L_{3}$,
\begin{align*}
   L_{3,1} &= \sum_{k=1}^N \frac{1}{6} \mathcal{C}_{4}(H_{ki}) \bb E \biggl(\langle e(\lambda)\rangle \frac{\partial^3 G_{ik}}{\partial H_{ki}^3} \biggr)\\
   &\prec q^{-2}\,, \numberthis \label{eq:cumulantl31}\\
     L_{3,2} &= \sum_{k=1}^N \frac{1}{2} \mathcal{C}_{4}(H_{ki}) \bb E \biggl(\frac{\partial\langle e_{\alpha}(\lambda)\rangle}{\partial H_{ki}} \frac{\partial^2 G_{ik}}{\partial H_{ki}^2} \biggr)\\
   &\prec q^{-2}\,, \numberthis \label{eq:cumulantl32}\\
    L_{3,4} &=  \sum_{k=1}^N\frac{1}{6}  \mathcal{C}_{4}(H_{ki}) \bb E \biggl(\frac{\partial^3\langle e_{\alpha}(\lambda)\rangle}{\partial H_{ki}^3} G_{ik}\biggr)\\
    &\prec q^{-2}\eta_*V_{ii}(f)^{-1/2} \biggl(\frac{1}{q} + \frac{1}{\sqrt{Ny}} \biggr) \\
    &\prec q^{-1}\biggl(\frac{1}{q} + \frac{1}{\sqrt{Ny}} \biggr)\,.
        \numberthis \label{eq:cumulantl34}
\end{align*}
    The remaining leading order term comes from $L_{3,3}$. Using \eqref{e3} and Theorem \ref{thm:semiA}
\begin{align*}
     L_{3,3} &= \frac{1}{2}\sum_{k=1}^N  \mathcal{C}_{4}(H_{ki}) \bb E \biggl(\frac{\partial^2\langle e_{\alpha}(\lambda)\rangle}{\partial H_{ki}^2} \frac{\partial G_{ik}}{\partial H_{ki}} \biggr) \\
     &= \frac{-1}{2}\sum_{k=1}^N(1+\delta_{ik})^{-1}  \mathcal {C}_{4}(H_{ki}) \bb E \biggl(\frac{\partial^2 e_{\alpha}(\lambda)}{\partial H_{ki}^2} (G_{ii}G_{kk}+G_{ik}^2) \biggr) \\
     \begin{split}
         &=\frac{-N}{2} \mathcal {C}_{4}(H_{12}) \bb E \biggl(m^2(z)e_{\alpha}(\lambda)\frac{2\mathrm{i}\lambda}{\pi \sqrt{V_{ii}(f)}}\int_{\Omega_{\alpha}}\partial_{\bar{z'}}\tilde{f}(z')m^3(z') \mathrm{d}^2z' \biggr) \\ & \qquad \qquad \qquad \qquad \qquad \qquad \qquad \qquad + O_{\prec}\biggl(q^{-1}\biggl(\frac{1}{q} + \frac{1}{\sqrt{Ny}}\biggr)\biggr) \,.
         \end{split} \numberthis \label{eq:cumulantl33}
\end{align*}
 Using \eqref{eq:H_diff}, \eqref{ep} and Theorem \ref{thm:semiA}, we have that for $r \geqslant 4$,
\begin{align*}
L_{r}  &= \sum_{k=1}^N \frac{1}{r!} \mathcal{C}_{r+1}(H_{ki}) \bb E\biggl(\frac{\partial^r \langle e_{\alpha}(\lambda)\rangle G_{ik}}{\partial H_{ki}^r} \biggr) \\&\prec \eta_*q^{-3}V_{ii}(f)^{-1/2} \\
& \prec q^{-2}\,. \numberthis \label{eq:generalr}
\end{align*}
Also, using \eqref{eq:H_diff} and Lemma \ref{lemma:integral_estimate},
\begin{align*}
N^{-1} \bb E\frac{\partial \langle e_{\alpha}(\lambda) \rangle G_{ii}}{\partial H_{ii}} &= N^{-1} \bb E\frac{\partial  e_{\alpha}(\lambda)}{\partial H_{ii}}G_{ii}-N^{-1} \bb E \langle e_{\alpha} (\lambda)\rangle G_{ii}G_{ii}\\
&\prec N^{-1}\eta_*V_{ii}(f)^{-1/2}\\
&\prec q/N \,.\numberthis \label{eq:K1}
\end{align*}
 Substituting \eqref{eq:c1}-\eqref{eq:K1} into \eqref{eq:cumulantE} and letting $l$ be large enough so that the remainder term is negligible completes the proof.
\end{proof}

    Let $z' = x'+\mathrm{i}y'$. We now simplify the first integral on the R.H.S of Lemma \ref{lemma:E}, note that by Theorem \ref{thm:semiA}, \[(G_{ii}(z')-m(z'))G_{ik}(z') \prec \frac{1}{q^2}+\frac{1}{Ny'}\] 
    for $i\ne k$. Applying Lemma \ref{lemma:integral_estimate}, we have,
\begin{align*}
\bigg|\int_{\Omega_{\alpha}}\partial_{\bar{z'}}\tilde{f}(z')(G_{ii}(z')-m(z'))G_{ik}(z')\mathrm{d}^2z'\bigg| 
&\prec \eta_*q^{-2}+N^{-1}\,.
\numberthis \label{eq:T_a1}
\end{align*}
Hence,
\begin{equation}
\begin{split}
    \label{eq:inter}
    (b) = N^{-1} \bb E e_{\alpha}(\lambda)\biggl[ \frac{-2\mathrm{i}\lambda}{\pi \sqrt{V_{ii}(f)}}&\int_{\Omega_{\alpha}}\partial_{\bar{z'}}\tilde{f}(z')m(z')(G(z')G(z))_{ii}\mathrm{d}^2z' \biggr] \\& + O_{\prec}\biggl( \biggl(\frac{1}{q}+\sqrt{\frac{1}{\eta_*N}} \biggr)\biggl(\frac{1}{q}+\frac{1}{\sqrt{Ny}}\biggr)\biggr)\,.
\end{split}
\end{equation}

We now estimate the error of further replacing $G$ in \eqref{eq:inter} by $m$. Note that
\begin{align*}
    (G(z')G(z))_{ii} &= \frac{G_{ii}(z')-G_{ii}(z)}{z'-z}\\
    &= \frac{m(z')-m(z)}{z'-z} + \frac{G_{ii}(z')-m(z')-(G_{ii}(z)-m(z))}{z'-z}\,. \numberthis \label{eq:GG}
\end{align*}
    When $z$ and $z'$ are in different half-planes, then the second term in \eqref{eq:GG} is of $O_{\prec}((q^{-1}+(Ny)^{-1/2}+(Ny')^{-1/2})(y+y')^{-1})$. When $z$ and $z'$ are in the same half-plane, we split into the cases $y<2y'$ and $y>2y'$, in the former case, let $\gamma$ be a straight line path from $z$ to $z'$. Then
    
\begin{align*}
    \bigg|\frac{G_{ii}(z')-m(z')-(G_{ii}(z)-m(z))}{z'-z}\bigg| &= \bigg|\frac{1}{z'-z} \int_{\gamma}(G_{ii}-m)'\mathrm{d}s\bigg|\\
    &\prec \sup_{z\in \gamma}|(G_{ii}-m)'| \\
    &\prec y^{-1}\biggl(\frac{1}{q}+\frac{1}{\sqrt{Ny}}\biggr) 
\end{align*}
    for $y > 2y'$, we have $|z'-z| \geqslant y/2$ and
\begin{align*}
    \bigg|\frac{G_{ii}(z')-m(z')-(G_{ii}(z)-m(z))}{z'-z}\bigg| &\prec y^{-1}\biggl(\frac{1}{q}+\frac{1}{\sqrt{Ny}}+\frac{1}{\sqrt{Ny'}}\biggr)\\
    &\prec y^{-1}\biggl(\frac{1}{q}+\frac{1}{\sqrt{Ny'}}\biggr)\,.
\end{align*}
Combining the above,
\[\bigg|\frac{G_{ii}(z')-m(z')-(G_{ii}(z)-m(z))}{z'-z}\bigg| \prec y^{-1}\biggl(\frac{1}{q}+\frac{1}{\sqrt{Ny'}}+\frac{1}{\sqrt{Ny}} \biggr)\,.\]
Applying Lemma \ref{lemma:integral_estimate}, we have 
\begin{align*}
(b) = N^{-1} \bb E e_{\alpha}(\lambda)\biggl[ \frac{-2\mathrm{i}\lambda}{\pi \sqrt{V_{ii}(f)}}&\int_{\Omega_{\alpha}}\partial_{\bar{z'}}\tilde{f}(z')m(z')\frac{m(z')-m(z)}{z'-z}\mathrm{d}^2z' \biggr] \\& + O_{\prec}\biggl( \biggl(\frac{1}{q}+\sqrt{\frac{1}{\eta_*N}} \biggr)\biggl(\frac{1}{q}+\frac{1}{\sqrt{Ny}}\biggr) +\frac{1}{Ny}+\frac{\sqrt{\eta_*}}{Ny^{3/2}}\biggr) \,.\numberthis \label{eq:finalb}
\end{align*}
Note that from \eqref{eq:m}, we have $(z+m)^{-1}=-m$. Substituting \eqref{eq:finalb} into \eqref{eq:lemmastat} and then applying Lemma \ref{lemma:integral_estimate}, we have
\begin{align*}\psi_{ii}'(\lambda) &= -\lambda\bb E[e_{\alpha}(\lambda)] \frac{1}{\pi^2V_{ii}(f)}\int_{\Omega_{\alpha}^2}\partial_{\bar{z}}\tilde{f}(z)\partial_{\bar{z'}}\tilde{f}(z') \\
&\times \biggl[2N^{-1} m(z)m(z')\frac{m(z')-m(z)}{z'-z}+N\mathcal {C}_{4}(H_{12})m^3(z)m^3(z')\biggr]\mathrm{d}^2z'\mathrm{d}^2z  \\
& +O_{\prec}(N^{-\tau/100} + N^{-\alpha/2})\,.
\end{align*}

Let $\alpha = \tau/100$. Using the fact that $|m(z)| \asymp 1$ and $|m'(z)| \prec |y|^{-1/2}$, the integrand above is of the order $O_{\prec}(N^{-1}(|y|^{-1}+|y'|^{-1}) + q^{-2})$. Applying Lemma \ref{lemma:integral_estimate} twice,
\begin{align*}
    &\biggl|\int_{ \Omega_{\alpha}^2}\partial_{\bar{z}}\tilde{f}(z)\partial_{\bar{z'}}\tilde{f}(z') \biggl[2N^{-1} m(z)m(z')\frac{m(z')-m(z)}{z'-z}+N\mathcal {C}_{4}(H_{12})m^3(z)m^3(z')\biggr]\mathrm{d}^2z'\mathrm{d}^2z \biggr|\\
    &\prec \eta_*/N + \eta^2_*/q^2 \\
    &\prec V_{ii}(f)\,.
\end{align*}
Hence we could replace $e_{\alpha}(\lambda)$ by $e(\lambda)$ by \eqref{er}. Next, we argue that we could replace $\Omega_{\alpha}^2$ by $\bb C^2$ in the above integral. The contributions from the regions $(\bb C\setminus\Omega_{\alpha})^2$ and $(\bb C\setminus\Omega_{\alpha})\times\Omega_{\alpha}$ could be shown to be of order $O_{\prec}(N^{-\tau/100}V_{ii}(f))$ as follows:
\begin{align*}
    &\biggl|\int_{(\bb C \setminus \Omega_{\alpha})^2}\partial_{\bar{z}}\tilde{f}(z)\partial_{\bar{z'}}\tilde{f}(z') \biggl[2N^{-1} m(z)m(z')\frac{m(z')-m(z)}{z'-z}+N\mathcal {C}_{4}(H_{12})m^3(z)m^3(z')\biggr]\mathrm{d}^2z'\mathrm{d}^2z \biggr|\\
    &\prec \int_{(\bb C \setminus \Omega_{\alpha})^2}|\partial_{\bar{z}}\tilde{f}(z)\partial_{\bar{z'}}\tilde{f}(z')| \biggl[N^{-1}(|y|^{-1} + |y'|^{-1})+q^{-2}\biggr]\mathrm{d}^2z'\mathrm{d}^2z \\
    &\prec \int_{\bb C \setminus \Omega_{\alpha}} |\partial_{\bar{z}}\tilde{f}(z)|\biggl[N^{-1}(|y|^{-1}(N^{\alpha -1})^2||f''||_{1} + N^{\alpha -1 }||f''||_{1})+q^{-2}(N^{\alpha -1})^2||f''||_{1}\biggr]\mathrm{d}^2z\\
    &\prec N^{-1}(N^{\alpha-1})^{3}||f''||_{1}^2 +q^{-2}(N^{\alpha -1})^4||f''||_{1}^2 \\
    &\prec N^{-\tau/100}V_{ii}(f)\,.
\end{align*}
Similarly,
\begin{align*}
    &\biggl|\int_{(\bb C \setminus \Omega_{\alpha})\times\Omega_{\alpha}}\partial_{\bar{z}}\tilde{f}(z)\partial_{\bar{z'}}\tilde{f}(z') \biggl[2N^{-1} m(z)m(z')\frac{m(z')-m(z)}{z'-z}+N\mathcal {C}_{4}(H_{12})m^3(z)m^3(z')\biggr]\mathrm{d}^2z'\mathrm{d}^2z \biggr|\\
    &\prec N^{-1}(N^{\alpha - 1}\eta_*||f''||_{1} + (N^{\alpha -1})^2||f''||_{1})+q^{-2}(N^{\alpha -1})^2\eta_*||f''||_{1} \\
    &\prec N^{-\tau/100}V_{ii}(f)\,,
\end{align*}
where Lemma \ref{lemma:integral_estimate} is applied once for the second estimate.
This completes the proof of Proposition \ref{prop:char}.
\subsection{Evaluation of Integrals} \label{subsection}
We now evaluate the integrals appearing in Proposition \ref{prop:char} by applying Green's Theorem, which states that for any sufficiently smooth function $F(z)$,
\begin{equation}
\label{eq:green}
\int_{\Omega}\partial_{\bar{z}}F(z)\mathrm{d}^2z=\frac{-\mathrm{i}}{2}\int_{\partial \Omega}F(z)\mathrm{d}z\,.
\end{equation}
    Now let $\eta >0$ be small and define 
\[ \widetilde{\Omega}_{\eta} \deq \{ (x+\mathrm{i}y, x'+\mathrm{i}y') \in \bb C^2:|y| \geqslant \eta/2, |y'| \geqslant \eta/2\}\,.\]
The boundary $\partial\widetilde{\Omega}_{\eta}$ consists of four branches, corresponding to the possible sign combinations of the imaginary part of $z$ and $z'$. Consider first the branch $z' = x' + \mathrm{i}\eta/2$, $z= x-\mathrm{i}\eta/2$, we have
\begin{align*}
    &\quad\frac{m(z')-m(z)}{z'-z}\\ &= \frac{-\mathrm{i}\eta+x-x'+\sqrt{z'^2 -4}+\sqrt{z^2-4}}{2(x'-x+\mathrm{i}\eta)} \\
    &=\frac{(\sqrt{4-x'^2}+\sqrt{4-x^2})\eta}{2((x'-x)^2+\eta^2)} +\frac{O(\eta^2)}{(x'-x)^2+\eta^2}
    +\frac{(x'-x)(x-x'+\mathrm{i}\sqrt{4-x'^2}+\mathrm{i}\sqrt{4-x^2})}{2((x'-x)^2+\eta^2)}\,,
    \numberthis \label{1}\\ \\
    &\quad m(z')m(z)\\ &= \frac{(-x'+\mathrm{i}\sqrt{4-x'^2} + O(\eta))(-x-\mathrm{i}\sqrt{4-x^2} + O(\eta))}{4}\,.\numberthis \label{2}
\end{align*}
    As $\eta$ tends to $0$, multiplying \eqref{1} and \eqref{2} together and summing over analogous expressions from the other boundary branches yields $-2\pi\sqrt{4-x^2}\delta(x'-x)+\sqrt{4-x^2}\sqrt{4-x'^2}$. Using \eqref{eq:green},
\begin{align*}
     &\quad\int_{\bb C^2}\partial_{\bar{z}}\tilde{f}(z)\partial_{\bar{z'}}\tilde{f}(z') m(z)m(z')\frac{m(z')-m(z)}{z'-z}\mathrm{d}^2z'\mathrm{d}^2z \\ &= \lim_{\eta \to0^+} \int_{\widetilde{\Omega}_{\eta}}\partial_{\bar{z}}\tilde{f}(z)\partial_{\bar{z'}}\tilde{f}(z') m(z)m(z')\frac{m(z')-m(z)}{z'-z}\mathrm{d}^2z'\mathrm{d}^2z \\
     &= \frac{1}{2}\int_{-2}^2\int_{-2}^2f(x)f(x')\pi\sqrt{4-x^2}\delta(x'-x)\mathrm{d}x\mathrm{d}x' -\biggl( \frac{1}{2} \int_{-2}^2 f(x)\sqrt{4-x^2}\mathrm{d}x\biggr)^2\\
     &=\pi^2\int_{-2}^2 f(x)^2\varrho_{sc}(x)\mathrm{d}x -\biggl( \pi \int_{-2}^2 f(x)\varrho_{sc}(x)\mathrm{d}x\biggr)^2 \,.\numberthis \label{eq:integral1}
\end{align*}
Similarly, denoting $z= x+\mathrm{i}\eta/2$, $z' = x' + \mathrm{i}\eta/2$, we have
\begin{align*}
    &\quad m^3(z)m^3(z') - m^3(\bar{z})m^3(z')-m^3(z)m^3(\bar{z'})+m^3(\bar{z})m^3(\bar{z'})\\
    &=(m^3(z)-m^3(\bar{z}))(m^3(z')-m^3(\bar{z'}))\\
    &=-\sqrt{4-x^2}(1-x^2)\sqrt{4-x'^2}(1-x'^2) + O(\eta)\,,
\end{align*}
and hence
\begin{align*}
    \int_{\bb C^2}\partial_{\bar{z}}\tilde{f}(z)\partial_{\bar{z'}}\tilde{f}(z') m^3(z')m^3(z)\mathrm{d}^2z'\mathrm{d}^2z &= \biggl(\pi\int_{-2}^2 f(x)(1-x^2)\varrho_{sc}(x)\mathrm{d}x \biggr)^2 \,.\numberthis \label{eq:integral2}
\end{align*}
Substituting \eqref{eq:integral1} and \eqref{eq:integral2} into \eqref{eq:char}, then integrating w.r.t. $\lambda$ yields Theorem \ref{thm:1}.

To calculate the expectation, we apply Lemma \ref{lemma:cumu} and Lemma \ref{lemma:integral_estimate}, obtaining
\begin{align*}
    &\bb E[f_{ii}(A)]\\ &= \frac{1}{\pi} \int_{\bb C} \partial_{\bar{z}}f(z)\bb EG_{ii} \mathrm{d}^2z \\
    &=\begin{aligned}[t]
    \frac{1}{\pi} \int_{\bb C} \partial_{\bar{z}}f(z)m(z) \mathrm{d}^2z &+ \frac{1}{\pi}\int_{\Omega_{\tau/100}}\partial_{\bar{z}}f(z)(\bb E G_{ii} -m(z))\mathrm{d}^2z \\ &+\frac{1}{\pi}\int_{\bb C \setminus \Omega_{\tau/100}}\partial_{\bar{z}}f(z)(\bb E G_{ii} -m(z))\mathrm{d}^2z 
    \end{aligned}\\
    &= \int_{-2}^2 f(x)\varrho_{sc}(x)\mathrm{d}x + O_{\prec}\biggl(N^{-\tau/100}(\eta_*/q + (\eta_*/N)^{1/2}) + ||f''||_{1}N^{\tau/10 -2} \biggr) \\
    &= \int_{-2}^2 f(x)\varrho_{sc}(x)\mathrm{d}x + O_{\prec}(N^{-\tau/100}V_{ii}(f)^{1/2})\,.
\end{align*}
{\small
	
	\bibliography{bibliography} 
	
	\bibliographystyle{amsplain}
    }
\end{document}